\numberwithin{equation}{section}
\newtheorem{prop}{Proposition}[section]
\newtheorem{lemma}[prop]{Lemma}
\newtheorem{df}[prop]{Definition}
\newtheorem{rem}[prop]{Remark}
\newcommand{\B}{\mathcal{B}}
\newcommand{\HH}{\mathcal{H}}
\newcommand{\E}{\mathcal{E}}
\newcommand{\U}{\mathcal{U}}
\newcommand{\mc}{\mathcal}
\newcommand{\mf}{\mathfrak}
\newcommand{\Q}{\mathbb{Q}}
\newcommand{\R}{\mathbb{R}}
\newcommand{\C}{\mathbb{C}}
\newcommand{\N}{\mathbb{N}}
\newcommand{\Z}{\mathbb{Z}}
\newcommand{\T}{\mathbb{T}}
\newcommand{\de}{\mathrm{d}}
\renewcommand{\u}{\underline}
\newcommand{\inner}[1]{\left<\smash[t]{#1}\right>}
\newcommand{\az}{\triangleright}
\newcommand{\za}{\triangleleft}
\newcommand{\bza}{\,\textrm{\footnotesize\raisebox{0.5pt}{$\blacktriangleleft$}}\,}
\renewcommand{\Im}{\mathrm{Im}}
\begin{document}

\title{\vspace*{-1cm}Modules over the Noncommutative Torus and Elliptic Curves \\[10pt] }

\author{Francesco D'Andrea$^{1,2}$, Gaetano Fiore$\hspace{1pt}^{1,2}$ and Davide Franco$\hspace{1pt}^1$ \\[12pt]
{\footnotesize $\hspace{-6pt}^1\,$Dipartimento di Matematica e Applicazioni, Universit\`a di Napoli {\sl Federico II}, Via Cintia, 80126 Napoli.} \\[3pt]
{\footnotesize $^2\,$I.N.F.N., Sezione di Napoli, Complesso MSA, Via Cintia, 80126 Napoli}}

\date{}

\maketitle

{\renewcommand{\thefootnote}{}
\footnotetext{%
\hspace*{-6pt}\textit{MSC-class 2010:} Primary: 58B34; Secondary: 46L87, 53D55. \\
\hspace*{10pt}\textit{Keywords:} Noncommutative torus, imprimitivity bimodules, elliptic curves, Moyal deformation.}}

\begin{abstract}\noindent
Using the Weil-Brezin-Zak transform of solid state physics, we describe line bundles over elliptic curves in terms of Weyl operators.
We then discuss the connection with finitely generated projective modules over the algebra $A_\theta$ of the noncommutative torus.
We show that such $A_\theta$-modules have a natural interpretation as Moyal deformations of vector bundles over an elliptic curve $E_\tau$, under the condition that the deformation parameter $\theta$ and the modular parameter $\tau$ satisfy a non-trivial relation.
\end{abstract}

\bigskip

\section{Introduction}\label{sec:1}
In the study of the ergodic action of a topological group $G$ on a compact space $M$, the orbit space can be
efficiently described by using the crossed product $C^*$-algebra $C(M)\rtimes G$. The most celebrated
example is the irrational rotation algebra $A_\theta$, arising from the action of $\Z$ on $\R/\Z$
generated by an irrational translation $x\mapsto x+\theta$.
This is a strict deformation quantization of $C(\T^2)$~\cite{Rie90},
hence the name ``noncommutative torus'', and is one of several examples of quantization of
manifolds carrying an action of $\T^2$ or more generally $\R^2$~\cite{Rie93}.

Finitely generated and projective $A_\theta$-modules have been classified by Connes and Rieffel in~\cite{Con80,Rie83}.
From a geometric point of view, it is natural to wonder whether they can be obtained as deformations of vector bundles
over the torus $\T^2$.
Line bundles over tori $\T^n$ have a nice physical interpretation as their $L^2$-sections form
Hilbert spaces describing (electrically) charged quantum particles on $\T^n$ in the presence of a non-zero
magnetic field, see \cite{Fio13} for a physically-minded presentation.

An obstruction in applying the standard quantization technique for the action of $\T^2$ resp.~$\R^2$~\cite{Rie93}
is given by the fact that the only finitely generated projective $C(\T^2)$-modules carrying an action of $\R^2$
are the free modules. More precisely, sections of a non-trivial line bundle $L\to\T^2$, when realized as quasi-periodic
functions in $C^\infty(\R^2)$, are not stable under the generators $\partial_x,\partial_y$ of the action of $\R^2$ by translation.

One possible solution is to replace the ordinary derivatives with ``covariant'' derivatives, cf.~equation \eqref{eq:nabla}. These
generate a projective representation of $\R^2$ on sections of line bundles, that is an proper representation of the
$3$-dimensional Heisenberg group $H_3$. One might then use a suitable Drinfeld twist \cite{Dri89} based on $\U(\mf{h}_3)[[\hbar]]$, with $\mf{h}_3$ the
Lie algebra of $H_3$, to deform vector bundles over $\T^2$ into modules for the noncommutative torus. 
A detailed study of Heisenberg twists and their use to study the dynamics of charged particles on the
noncommutative torus, along the lines of \cite{Fio11,Fio13}, is postponed to future works.

In this paper, we follow a different approach and realize finitely generated projective $A_\theta$-modules as deformations for an action of $\R^2$
of line bundles over an elliptic curve $E_\tau$, provided the modular parameter $\tau$ is chosen appropriately.
For a line bundle with degree $p$, in order for this construction to work the modular parameter and the deformation parameter $\tau$ must satisfy the constraint $\tau-\tfrac{p\theta}{2}i\in\Z+i\Z$.

Let us stress that here we are not interested in complex structures on the noncommutative torus itself, but only on how to realize its modules as non-formal deformations.
The reader interested in complex structures on noncommutative tori, in connection with elliptic curves, can see e.g.~the works of Polishchuk, Schwarz, Vlasenko and Plazas \cite{PS02,Pol02,Vla06,Pla06}.
For the relation between imprimitivity bimodules and \emph{noncommutative} elliptic curves one can see \cite{MvS08}.
Finally, for deformations of modules carrying an action of $\R^2$, for a general algebra acted by $\R^2$ via automorphisms,
one can see \cite{LW11}.

The paper is organized as follows.
In \S\ref{sec:2}, we recall definition and properties of the noncommutative torus and its modules.
We will be as self-contained as possible, with the purpose of both fixing notations and facilitating the reader.
In \S\ref{sec:3}, using the Weil-Brezin-Zak transform \eqref{eq:series}, we give a description of the module of sections
of a line bundles over an elliptic curve in terms of Weyl operators, much in the spirit of \S\ref{sec:4.1}, cf.~Prop.~\ref{prop:hat}.
In \S\ref{sec:5}, we show how to obtain modules isomorphic to the ones in \S\ref{sec:4.1} from
a deformation construction that makes use of Moyal star product \eqref{eq:Moyalprod}; in particular,
every module of \S\ref{sec:4.1} can be obtained in this way.
In Prop.~\ref{prop:4.4} we explain how to obtain a Hermitian structure similar to the Hermitian structure \eqref{eq:herstr} used in~\cite{Con80,Rie83}
from the canonical Hermitian structure of $C^\infty(\R^2)$.


\section{The noncommutative torus}\label{sec:2}
In this section, we collect some basic facts about the noncommutative torus, the description of
its algebra using Weyl operators, and recall the construction of finitely generated projective
modules, that in our framework replace vector bundles.

\subsection[The abstract C*-algebra and Weyl operators]{The abstract $C^*$-algebra and Weyl operators}
Let $0\leq\theta<1$. We denote by $A_\theta$ the universal unital $C^*$-algebra generated
by two unitary operators $U$ and $V$ with commutation relation
$$
UV=e^{2\pi i\theta}VU \;.
$$
If $\theta=0$, $A_0\simeq C(\T^2)$ is isomorphic to the $C^*$-algebra of continuous functions on a $2$-torus, with standard operations and sup norm.
Rather than $A_\theta$, we are interested in the subset
$A^\infty_\theta\subset A_\theta$, whose elements are series
$$
\sum\nolimits_{m,n\in\Z}a_{m,n}U^mV^n \;,
$$
where $\{a_{m,n}\}_{m,n\in\Z}$ is a sequence for which the norm
$$
p_k(a)^2=\sup_{m,n\in\Z}(1+m^2+n^2)^k|a_{m,n}|^2
$$
is finite for all $k\in\N$. 
We refer to such a sequence as ``rapid decreasing'',
and to the map $(m,n)\mapsto a_{m,n}$ as Schwartz function on $\Z^2$.
The collection of all sequences satisfying the condition above will be
denoted by $\mc{S}(\Z^2)$.
The set $A^\infty_\theta$ with the family of norms $p_k$ is a Fr\'echet pre $C^*$-algebra
(cf.~e.g.~\cite{Con94,Var06} and references therein).

A concrete description of this algebra can be obtained by introducing Weyl operators.
Let $a,b\in\R$. The Weyl operator $W(a,b)$ is the unitary operators on $L^2(\R)$ defined by
\begin{equation}\label{eq:WeylOP}
\bigl\{W(a,b)\psi\bigr\}(t)=e^{-\pi iab}e^{2\pi ibt}\psi(t-a) \;,\qquad\psi\in L^2(\R).
\end{equation}
Since
\begin{equation}\label{eq:Weylprod}
W(a,b)W(c,d)=e^{-\pi i(ad-bc)}W(a+c,b+d) \;,
\end{equation}
the linear span of all Weyl operators is a unital $*$-algebra.
A faithful unital $*$-representation $\pi$ of $A_\theta^\infty$
is given on generators by $\pi(U):=W(1,0)$ and $\pi(V):=W(0,-\theta)$.

The difference between the rational 
and irrational 
case can be understood by considering the associated von Neumann algebra
$
\mathcal{N}_\theta=\big\{W(m,n\theta):m,n\in\Z\big\}'' ,
$
closure of $\pi(A_\theta^\infty)$. If $\theta=p/q$ with $p,q$
coprime, it can be shown that the center is the
subalgebra generated by the operators $W(q,0)$ and $W(0,p)$,
isomorphic to $L^\infty(\T^2)$.
On the other hand, if $\theta$ is irrational, the center is trivial and $\mathcal{N}_\theta$ is a factor
(in fact, a type $\text{II}_1$ factor).

\smallskip

Let us conclude this section by recalling the deformation point of view.
Let $x,y\in\R^2$ and $u,v$ be the functions
\begin{equation}\label{eq:uandv}
u(x,y):=e^{2\pi ix} \;, \qquad
v(x,y):=e^{2\pi iy} \;.
\end{equation}
They are generators of the unital $C^*$-algebra $C(\T^2)$ (any periodic continuous function
is a uniform limit of trigonometric polynomials). By standard Fourier analysis, we know that
any $f\in C^\infty(\T^2)$ can be written as
\begin{equation}\label{eq:Fourier}
f=\sum\nolimits_{m,n\in\Z}a_{m,n}u^mv^n \;,
\end{equation}
and this series converges (uniformly) to a smooth function
$f$ if and only if $\{a_{m,n}\}\in\mc{S}(\Z^2)$.
There is an associative product $\ast_\theta$ on $C^\infty(\T^2)$ defined on monomials by
\begin{equation}\label{eq:asttheta}
(u^jv^k)\ast_\theta(u^mv^n)=\sigma((j,k),(m,n))u^{j+m}v^{k+n}
\;,\qquad
\text{for all}\;j,k,m,n\in\Z \;,
\end{equation}
where $\sigma:\Z^2\times\Z^2\to\C^*$ is a $2$-cocycle in the group cohomology complex of $\Z^2$, given by:
$$
\sigma((j,k),(m,n)):=e^{i\pi\theta(jn-km)} \;.
$$
One can then interpret the algebra $(C^\infty(\T^2),\ast_\theta)$ as a cocycle quantization
of $\mc{S}(\Z^2)$ with convolution product. The $C^*$-completion is the twisted group $C^*$-algebra $C^*(\Z^2,\sigma)$.
Note that since $(f\ast_\theta g)^*=g^*\ast_\theta f^*$, we get a $*$-algebra with undeformed involution.

A unital $*$-algebra isomorphism $T_\theta:(C^\infty(\T^2),\ast_\theta)\to A_\theta^\infty$ is given
on $f$ as in \eqref{eq:Fourier} by
\begin{equation}\label{eq:Ttheta}
T_\theta(f)
:=\sum\nolimits_{m,n\in\Z}a_{m,n}e^{-\pi imn\theta}U^mV^n \;,
\end{equation}
where the phase factors are chosen to have $T_\theta(f)^*=T_\theta(f^*)$ for all $f$.

The product \eqref{eq:asttheta} can be extended to a larger class of functions as explained in \cite{Rie93}.
A (well-defined) associative product on the Schwartz space $\mc{S}(\R^2)$ is given by:
\begin{equation}\label{eq:Moyalprod}
(f\ast_\theta g)(z)=\frac{4}{\theta^2}\int_{\C\times\C}f(z+\xi)g(z+\eta)e^{\frac{4\pi i}{\theta}\Im(\xi\hspace{1pt}\overline{\eta})}
\de\xi\hspace{1pt}\de\eta \;,
\end{equation}
where in complex coordinates $z=x+iy$ and $\de z=\de x\,\de y$. We refer to \eqref{eq:Moyalprod} as the ``Moyal product''.
This product is extended by duality to tempered distributions, and this allows to define an associative product, by restriction,
on several interesting function spaces. A relevant example is the space $\B(\R^2)\subset C^\infty(\R^2)$ of smooth functions
that are bounded together with all their derivatives. It is a theorem \cite[Prop.~2.23]{GGISV04} that $\B(\R^2)$ with the product above
(and suitable seminorms) is a unital Fr\'echet pre $C^*$-algebra. If $f,g$ are periodic, we recover the product \eqref{eq:asttheta}, cf.~\cite[Eq.~(2.12)]{GGISV04},
thus justifying using the same symbol for \eqref{eq:asttheta} and \eqref{eq:Moyalprod}.

\subsection{Heisenberg modules}\label{sec:4.1}
Finitely generated projective $A_\theta^\infty$-modules were constructed in~\cite{Con80},
and then completely classified in~\cite{Rie83}. By Serre-Swan theorem they provide a
noncommutative analog of complex smooth vector bundles.
The stable range theorem tells us that, on a base space $X$ of
real dimension $2$, any two complex vector bundles that are stably equivalent are also isomorphic,
i.e.~the map $\mathrm{Vect}(X)\to K^0(X)$ is injective. As a consequence, since
$K^0(\T^2)\simeq\Z\oplus\Z$, finitely generated projective $C^\infty(\T^2)$-modules
(complex vector bundles on $\T^2$) are classified by two integers, the rank $r$ and the first
Chern number $d$, or ``degree''.
The tensor product of vector bundles gives $K^0(\T^2)$ the structure of a unital ring,
isomorphic to $\Z[t]/(t^2)$ via the map $(r,d)\mapsto r+td$~\cite{Ati57}.

Something similar holds for $A_\theta^\infty$, $\theta\neq 0$. It is proved in~\cite{Rie83} that
$K_0(A_\theta^\infty)$ has the cancellation property, meaning that the map from
equivalence classes of finitely generated projective modules to the $K_0$-group
is injective, and since $K_0(A_\theta^\infty)\simeq\Z\oplus\Z$, these are also
classified by two integers $p,s$. The explicit definition of the corresponding modules,
here denoted $\E_{p,s}$, is recalled below,\footnote{Here we use the notations of
\cite[\S7.3]{Var06}, except for a replacement $\theta\to -\theta$.} and the aim of this paper
will be to describe them as deformations of vector bundles in a suitable sense.

\begin{df}[\cite{Con80,Rie83}]\label{def:heismod}
Let $p,s\in\Z$ and $p\geq 1$.
As a vector space, $\E_{p,s}:=\mc{S}(\R)\otimes\C^p$ with $\mc{S}(\R)$ the set of
Schwartz functions on $\R$. The right $A^\infty_\theta$-module structure is given by
$$
\psi\za U=\big\{W(\tfrac{s}{p}+\theta,0)\otimes (S^*)^s\big\}\psi \;,\qquad
\psi\za V=\big\{W(0,1)\otimes C\big\}\psi \;,
$$
where $C,S\in M_p(\C)$ are the \emph{clock} and \emph{shift} operators:
\begin{align}\label{eq:CS}
C&=
\text{\small $\begin{pmatrix}
1 & 0 & 0 & \ldots & 0 \\
0 &  e^{2\pi i/p} & 0 & \ldots & 0 \\
0 & 0 &  e^{4\pi i/p} & \ldots & 0 \\
\ldots & \ldots & \ldots & \ldots & \ldots \\
0 & 0 & 0 & \ldots &  e^{2(p-1)\pi i/p}
\end{pmatrix}$} \;,&
S&=
\text{\small $\begin{pmatrix}
0 & 0 & 0 & \ldots & 0 & 1 \\
1 & 0 & 0 & \ldots & 0 & 0 \\
0 & 1 & 0 & \ldots & 0 & 0 \\
0 & 0 & 1 & \ldots & 0 & 0 \\
\ldots & \ldots & \ldots & \ldots & \ldots & \ldots \\
0 & 0 & 0 & \ldots & 1 & 0
\end{pmatrix}$} \;,
\end{align}
and we adopt the convention that $C=S=1$ for $p=1$.
\end{df}

\noindent
If $\theta$ is irrational, any finitely generated projective right $A_\theta$-module is
isomorphic either to a free module $(A_\theta^\infty)^p$ or to
a module $\E_{p,s}$, with $p$ and $s$ coprime ($p>0$ and $s\neq 0$) or $p=1$ and $s=0$~\cite{Rie83,CR87}.
There is also a pre-Hilbert module structure on $\E_{p,s}$ recalled below.

\begin{df}[\cite{Con80,Rie83}]\label{prop:4.4}
An $A_\theta^\infty$-valued Hermitian structure on $\E_{p,s}$ is given by
\begin{equation}\label{eq:hstr}
\inner{\psi,\varphi}=\sum_{m,n\in\Z}
U^m V^n\int_{-\infty}^\infty
(\psi\za U^mV^n|\varphi)_t\,
\de t \;.
\end{equation}
where $(\,|\,):\E_{p,s}\times\E_{p,s}\to \mathcal{S}(\R)$ is
the canonical Hermitian structure of $\E_{p,s}=\mathcal{S}(\R)\otimes\C^p$
as a right pre-Hilbert $\mathcal{S}(\R)$-module,
given by
\begin{equation}\label{eq:canher}
(\psi|\varphi)_t:=\sum\nolimits_{r=1}^p\overline{\psi_r(t)}\varphi_r(t) \;,
\end{equation}
for all $\psi=(\psi_1,\ldots,\psi_p)$ and $\varphi=(\varphi_1,\ldots,\varphi_p)\in\E_{p,s}$.
\end{df}

To the best of our knowledge, for $\theta\neq 0$ there is no analog of the ring of vector bundles.
The analog of the tensor product of vector bundles would be the tensor product of bimodules over the algebra
itself, but while in the commutative case every module is a bimodule, this is not true for a general noncommutative algebra.

It is known, for example, that $A_\theta$ has non-trivial Morita auto-equivalence bimodules if and only if
$\theta$ is a real quadratic irrationality, i.e.~$\Q(\theta)$ is a real quadratic field~\cite{Man04}.
The strong analogy with the theory of complex multiplication of elliptic curves suggests that
noncommutative tori may play a role in number theory similar to the one played by elliptic curves.
This idea is the starting point of Manin real multiplication program~\cite{Man04}.

Constructing an analog of the ring of vector bundles is the typical problem that could be addressed, at least
formally, with the use of a Drindeld twist.


\section{Vector bundles over elliptic curves}\label{sec:3}
In this section we recall some basic facts about elliptic curves, in particular
about non-holomorphic factors of automorphy, and establish a
correspondence between modules of sections of line bundles and Heisenberg modules.

\subsection{Smooth and holomorphic line bundles over elliptic curves}

Let us identify $z=x+iy\in\C$ with the point $(x,y)\in\R^2$. The set
of complex-valued smooth functions on $\R^2\simeq\C$ will be denoted by $C^\infty(\C)$,
rather than $C^\infty(\R^2)$, when complex coordinates are used.

Fix $\tau\in\C$ with $\mathrm{Im}(\tau)>0$, let
$\Lambda:=\Z+\tau\Z$ and $E_\tau=\C/\Lambda$ the
corresponding elliptic curve with modular parameter
$\tau$. We use the symbol $\T^2$ when $\tau=i$.
Equivalently (Jacobi uniformization)
$E_\tau\simeq\C^*/q^{2\Z}$ where
$$
q=e^{\pi i\tau} \;,
$$
the biholomorphism being given by the exponential map
$z\mapsto e^{2\pi iz}$.

The algebra $C^\infty(E_\tau)$ can be identified with the subalgebra
of $C^\infty(\C)$ made of $\Lambda$-invariant functions:
$$
C^\infty(E_\tau)=\big\{f\in C^\infty(\C)\,:\,
f(z+m+n\tau)=f(z)\,\forall\,m,n\in\Z\big\} \;,
$$
The space $C^\infty(\C)$ has a natural structure of $C^\infty(E_\tau)$-module.

Let $\alpha:\Lambda\times\C\to\C^*$ be a smooth function and $\,\pi_\alpha:\Lambda\to\mathrm{End}\,C^\infty(\C)\,$ given by
\begin{equation}\label{eq:pi}
\pi_\alpha(\lambda)f(z):=\alpha(\lambda,z)^{-1}f(z+\lambda) \;,\qquad\forall\;\lambda\in\Lambda,z\in\C\;.
\end{equation}
Then $\pi_\alpha$ is a representation of the abelian group $\Lambda$ if and only if
\begin{equation}\label{eq:alphaeq}
\alpha(\lambda+\lambda',z)=\alpha(\lambda,z+\lambda')\alpha(\lambda',z) \;,
\qquad\forall\;z\in\C, \;\lambda,\lambda'\in\Lambda.
\end{equation}
An $\alpha$ satisfying \eqref{eq:alphaeq} is called a \emph{factor of automorphy} for $E_\tau$.
In the notations of~\cite{BL04},
this is an element of $Z^1(\Lambda,H^0(\mathcal{O}^*_V))$, where $V=\C$.
Consider the corresponding set of smooth quasi-periodic functions:
\begin{equation}\label{eq:smsec}
\Gamma_\alpha:=\big\{f\in C^\infty(\C):
\pi_\alpha(\lambda)f=f
\;\forall\;\lambda\in\Lambda 
\big\} \;.
\end{equation}
Then $\Gamma_\alpha$ is stable under multiplication by $C^\infty(E_\tau)$,
and hence a $C^\infty(E_\tau)$-submodule of $C^\infty(\C)$. It is projective
and finitely generated, since its elements can be thought as smooth sections
of a line bundle on $E_\tau$ (Appell-Humbert theorem).

Holomorphic elements of $\Gamma_\alpha$ are the so-called \emph{theta functions} for the
factor $\alpha$: they form a finite-dimensional vector space (while $\Gamma_\alpha$ is
finitely generated, but not finite-dimensional). Elements of $\Gamma_\alpha$ are called
\emph{differentiable theta functions}~\cite[pag.~57]{BL04}.

The basic example is the Jacobi theta function~\cite{Mum83}:
$$
\vartheta(z;q)=\sum_{n\in\Z}q^{n^2}e^{2\pi i nz} \;,\qquad\quad q=e^{\pi i\tau}\;.
$$
It satisfies
$$
\vartheta(z+m+n\tau;q)=q^{-n^2}e^{-2\pi i nz}\vartheta(z;q) \;.
$$
Hence the factor of automorphy is
\begin{equation}\label{eq:alpha}
\alpha(\lambda,z)=q^{-n^2}e^{-2\pi inz} \;,\qquad\quad z=x+iy,\;\lambda=m+n\tau .
\end{equation}
This corresponds to a holomorphic line bundle with degree $1$. If we forget
about the holomorphic structure, this is the unique smooth line bundle with degree
$1$ modulo isomorphisms.
Let $\tau=\omega_x+i\omega_y$, with $\omega_x,\omega_y\in\R$ and $\omega_y>0$.
The isomorphism of $C^\infty(E_\tau)$-modules $f\mapsto e^{-\pi y^2/\omega_y}f$ maps $\Gamma_\alpha$
into $\Gamma_\beta$, with
\begin{equation}\label{eq:beta}
\beta(\lambda,z)=e^{-\pi i\omega_xn^2}e^{-2\pi inx} \;,\qquad\quad z=x+iy,\;\lambda=m+n\tau,
\end{equation}
a unitary factor of automorphy.
Note that a $C^\infty(E_\tau)$-module isomorphism corresponds (is dual to) an isomorphism of smooth
vector bundles, but not necessarily of holomorphic vector bundles. In particular, 
$\Gamma_\beta$ are not (smooth) sections of an holomorphic vector bundle, and none of its
elements is holomorphic (cf.~\S\ref{sec:4.3}).

The general smooth line bundle with degree $p$ can be obtained from the factor of automorphy
$\beta^p$.


\subsection{Sections of line bundles and Heisenberg modules}

Let us denote by $\mc{F}_{\tau,p}$ the $C^\infty(E_\tau)$-module \eqref{eq:smsec}
associated to the factor of automorphy $\beta^p$, with $p\in\Z$ and $\beta$ as in
\eqref{eq:beta}. So, $f\in C^\infty(\C)$ belongs to $\mc{F}_{\tau,p}$ if{}f:
\begin{equation}\label{eq:expl}
f(z+1)=f(z)\quad\text{and}\quad
f(z+\tau)=e^{-\pi ip(\omega_x+2x)}f(z) \quad\forall\;z=x+iy\in\C.
\hspace{-5pt}
\end{equation}
Every $f\in\mc{F}_{\tau,p}$ is a bounded function, since from \eqref{eq:expl} it follows that
$|f|$ is a continuous periodic function.
Two maps $\nabla_1,\nabla_2:\mc{F}_{\tau,p}\to\mc{F}_{\tau,p}$ are given by:
\begin{equation}\label{eq:nabla}
\nabla_1f(z)=(\partial_x+\tfrac{2\pi ip}{\omega_y}y)f(z) \;,\qquad
\nabla_2f(z)=\partial_yf(z) \;.
\end{equation}
One can explicitly check that for any $f$ satisfying \eqref{eq:expl}, $\nabla_1f$ and $\nabla_2f$ satisfy \eqref{eq:expl} too.

In the next proposition, we study a transform very close to the
\emph{Weil-Brezin-Zak transform} used in solid state physics \cite[\S1.10]{Fol89}
(see also the end of section 2 in \cite{Fio13}).
In the following, $[n]:=n \!\!\mod p$.

\begin{prop}\label{prop:3.2}
Every $f\in\mc{F}_{\tau,p}$ is of the form
\begin{equation}\label{eq:series}
f(z)=\sum_{n\in\Z}e^{2\pi inx}
e^{\pi in^2\frac{\omega_x}{p}}f_{[n]}(y+n\tfrac{\omega_y}{p}) \;,
\end{equation}
for some Schwartz functions $f_{[1]},\ldots,f_{[p]}\in\mathcal{S}(\R)$,
uniquely determined by $f$ and given by:
\begin{equation}\label{eq:inver}
f_{[n]}(y)=
\int_0^1 e^{-2\pi inx}e^{\pi in^2\frac{\omega_x}{p}}f(z-\tfrac{n}{p}\tau)\de x
\qquad\forall\;n=1,\ldots,p.
\end{equation}
We denote by $\varphi_{\tau,p}:\mc{F}_{\tau,p}\to\HH_{\tau,p}:=\mc{S}(\R)\otimes\C^p$
the bijection associating to $f$ the vector valued function $\u{f}=(f_{[1]},\ldots,f_{[p]})^t$.
\end{prop}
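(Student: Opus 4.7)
The plan is to invert the series \eqref{eq:series} by two successive Fourier operations, one for each of the two quasi-periodicity relations in \eqref{eq:expl}, and then check the converse. First I would use $f(z+1)=f(z)$ to Fourier expand
\begin{equation*}
f(x+iy)=\sum_{n\in\Z}c_n(y)\,e^{2\pi inx},\qquad c_n(y):=\int_0^1\!f(x+iy)\,e^{-2\pi inx}\de x,
\end{equation*}
with each $c_n\in C^\infty(\R)$. Since $f$ is smooth and $1$-periodic in $x$, repeated integration by parts in $x$ yields the uniform bounds
\begin{equation*}
\sup_{y\in K}\,|n|^k\,|\partial_y^\ell c_n(y)|<\infty\qquad\text{for all }k,\ell\in\N\text{ and compact }K\subset\R,
\end{equation*}
which will eventually give the Schwartz property.

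Substituting the Fourier series into the second relation $f(z+\tau)=e^{-\pi ip(\omega_x+2x)}f(z)$ and matching coefficients of $e^{2\pi inx}$ produces the recursion
\begin{equation*}
c_{n+p}(y)=e^{\pi ip\omega_x}\,e^{2\pi in\omega_x}\,c_n(y+\omega_y).
\end{equation*}
A direct check (expanding $(n+p)^2/p=n^2/p+2n+p$) shows that this recursion is exactly equivalent to the ansatz $c_n(y)=e^{\pi in^2\omega_x/p}f_{[n]}(y+n\omega_y/p)$ together with the compatibility $f_{[n+p]}=f_{[n]}$. Hence $f_{[n]}$ is determined by $n\bmod p$, giving $p$ smooth functions $f_{[1]},\dots,f_{[p]}$ uniquely associated with $f$. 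Inserting the integral formula for $c_n$ into the ansatz, substituting $x\mapsto x-n\omega_x/p$, and using $1$-periodicity in $x$ to shift the integration range back to $[0,1]$ then reproduces \eqref{eq:inver}.

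The main obstacle is showing $f_{[r]}\in\mc{S}(\R)$. The key observation is that the ansatz yields $c_{r+kp}(y)=(\text{phase})\cdot f_{[r]}(y+(r+kp)\omega_y/p)$ for every $r\in\{1,\dots,p\}$ and $k\in\Z$; as $y$ ranges over the compact interval $[0,\omega_y]$ and $k$ over $\Z$, the argument $Y:=y+(r+kp)\omega_y/p$ covers all of $\R$ (up to a fixed shift of $r\omega_y/p$) with $|k|\asymp|Y|/\omega_y$. Combined with the uniform Fourier-decay estimates from the first step, this translates into bounds $|Y|^N|\partial_Y^\ell f_{[r]}(Y)|\leq C_{N,\ell}$ valid on all of $\R$, for all $N,\ell\in\N$. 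Hence $f_{[r]}\in\mc{S}(\R)$.

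For the converse, starting from Schwartz functions $f_{[1]},\dots,f_{[p]}$ and defining $f$ by \eqref{eq:series}, rapid decay of the $f_{[n]}$ together with their derivatives ensures absolute uniform convergence of the series and of all its $x$- and $y$-derivatives on compact subsets of $\C$, so $f\in C^\infty(\C)$. Periodicity in $x$ is manifest, while the reindexing $n\mapsto n-p$ combined with $f_{[n-p]}=f_{[n]}$ and a straightforward computation of phases yields $f(z+\tau)=e^{-\pi ip(\omega_x+2x)}f(z)$. This shows that $\varphi_{\tau,p}$ is a bijection with inverse given by \eqref{eq:series}.
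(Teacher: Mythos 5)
Your proposal is correct, and its skeleton (Fourier expansion in $x$, the recursion $c_{n+p}(y)=e^{\pi i(2n+p)\omega_x}c_n(y+\omega_y)$, the ansatz solving it, and the Weierstrass M-test for the converse) coincides with the paper's. The one step you handle by a genuinely different mechanism is the crux of the proposition: why smoothness of $f$ forces $f_{[r]}\in\mc{S}(\R)$. The paper integrates by parts against the covariant derivatives $\nabla_1=\partial_x+\tfrac{2\pi ip}{\omega_y}y$ and $\nabla_2=\partial_y$ of \eqref{eq:nabla}, using that these preserve $\mc{F}_{\tau,p}$ and that every element of $\mc{F}_{\tau,p}$ is bounded (its modulus being periodic); this gives $\sup_Y|Y^j\partial_Y^k\widetilde f(Y)|\leq(2\pi)^{-j}\|\nabla_1^j\nabla_2^kf\|_\infty$ directly. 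You instead integrate by parts only in $x$, obtaining decay in the Fourier index $n$ uniformly for $y$ in a compact fundamental interval, and then convert decay in $n$ into decay in the argument $Y=y+(r+kp)\omega_y/p$ via the staircase identification $|n|\asymp|Y|$ built into the recursion. Both are valid; yours is slightly more elementary in that it never invokes the connection $\nabla$ or the boundedness of quasi-periodic functions, and it is written for general $\tau$ and $p$ from the start (the paper proves the key lemma only for $\tau=i$, $p=1$, delegating the general case to the reader), while the paper's route has the side benefit of exhibiting the natural operators $\nabla_1,\nabla_2$ that reappear later in the discussion of connections and Chern numbers.
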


\begin{proof}
Since $f$ is a smooth periodic function of $x$, then
$
f(z)=\sum_{n\in\Z}e^{2\pi inx}\widetilde f_n(y) ,
$
for some functions $\widetilde f_n$ of $y$. The condition
$$
f(z+\tau)=\sum_{n\in\Z}e^{2\pi in(x+\omega_x)}\widetilde f_n(y+\omega_y)
=e^{-\pi ip(\omega_x+2x)}f(z)
=\sum_{n\in\Z}e^{-\pi ip\omega_x}e^{2\pi i(n-p)x}\widetilde f_n(y) \vspace{-5pt}
$$
gives the recursive relation
$
\widetilde f_{n+p}(y)=e^{\pi i (2n+p)\omega_x}\widetilde f_n(y+\omega_y) \;.
$
If we write $n=j+kp$ with $k\in\Z$ and $0\leq j<p$, then
previous equation has solution
$$
\widetilde f_{j+kp}(y)=e^{\pi i(2jk+k^2p)\omega_x}\widetilde f_j(y+k\omega_y) \;.
$$
Called $f_{[j]}(y)=e^{-\pi i\frac{j^2}{p}\omega_x}\widetilde f_j(y-\frac{j}{p}\omega_y)$,
for $j=0,\ldots,p-1$,
after the replacement $j+kp\to n$ we get \eqref{eq:series}. Using Fourier inversion formula
we get \eqref{eq:inver}. We must now prove that $f$ is smooth if and only if $f_{[1]},\ldots,f_{[p]}$
are Schwartz functions.

To simplify the notations, let us give the proof for $\tau=i$ and $p=1$, but the generalization to arbitrary $p$ and $\tau$
is straightforward. So, what we want to prove is that:

\begin{lemma}
$\sum_{n\in\Z}e^{2\pi inx}\widetilde{f}(y+n)$ converges to a function
$f\in C^\infty(\R^2)\iff\widetilde{f}\in\mc{S}(\R)$.
\end{lemma}

\noindent
Let $g_n(z)=e^{2\pi inx}\widetilde{f}(y+n)$.
By Weierstrass M-test, $\sum_ng_n(z)$ converges (uniformly) to a $C^\infty$-function
if for all $j,k$ there exists a sequence of non-negative numbers $C^{j,k}_n$
which is summable in $n$ and such that:
$$
|\partial_x^j\partial_y^kg_n(z)|\leq C^{j,k}_n \;.
$$
Fix an $y_0>0$. For all $|y|\leq y_0$ we have $|n|=|y+n-y|\leq |y+n|+|y|\leq |y+n|+y_0$,
and:
$$
|\partial_x^j\partial_y^kg_n(z)|=
(2\pi |n|)^j|\partial_y^k\widetilde{f}(y+n)|\leq
(2\pi)^jn^{-2}\sum\nolimits_{l=0}^{j+2}\tbinom{j+2}{l}y_0^{j+2-l}
|(y+n)^l\partial_y^k\widetilde{f}(y+n)| \;.
$$
for all $n\neq 0$. If $\widetilde{f}$ is Schwartz, 
$\|\widetilde{f}\|_{l,k}:=\sup_y|y^l\partial_y^k\widetilde{f}(y)|$ is finite for all $l,k$, and
$$
|\partial_x^j\partial_y^kg_n(z)|\leq
(2\pi)^jn^{-2}\sum\nolimits_{l=0}^{j+2}\tbinom{j+2}{l}y_0^{j+2-l}\|\widetilde{f}\|_{l,k}
=K^{j,k}_{y_0}n^{-2} \;.
$$
If we set $C^{j,k}_n=K^{j,k}_{y_0}n^{-2}$ for $n\neq 0$, this series is summable.
Thus $\sum_ng_n(z)$ is uniformly convergent (together with its derivatives)
to a $C^\infty$-function on any strip $\R\times[-y_0,y_0]\subset\R^2$, and then
on the whole $\R^2$. This proves the ``$\Leftarrow$'' part.

\smallskip

Assume now the convergence of the series to a $C^\infty$-function $f$. By standard
Fourier analysis, $\widetilde{f}(y+n)=\int_0^1e^{-2\pi inx}f(z)\de x$ and, integrating
by parts,
$$
(y+n)^j\partial_y^k\widetilde{f}(y+n)=\Big(\frac{-i\,}{\,2\pi}\Big)^j\int_0^1e^{-2\pi inx}\nabla_1^j\nabla_2^kf(z)\de x \;.
$$
Recall that $\nabla_1^j\nabla_2^kf\in\mc{F}_{\tau,p}$ is a bounded function.
Let $C_{j,k}$ be its sup norm. Then $\|\widetilde{f}\|_{j,k}\leq (2\pi)^{-j}C_{j,k}$, proving
that $\widetilde{f}$ is a Schwartz function.
\end{proof}

$\mc{F}_{\tau,p}$ is a right $C^\infty(E_\tau)$-module. Given two elements
$f,g$, it follows from \eqref{eq:expl} that the product $f^*g$ is
a periodic function, hence an element of $C^\infty(E_\tau)$. The Hermitian
structure $(f,g)\mapsto f^*g$ turns $\mc{F}_{\tau,p}$ into a
pre-Hilbert module.

Note that $C^\infty(E_\tau)$ is generated by the two unitaries
\begin{equation}\label{eq:uv}
u_\tau(x,y):=e^{2\pi i(x-\frac{\omega_x}{\omega_y}y)} \;,\qquad\quad
v_\tau(x,y):=e^{2\pi i\frac{1}{\omega_y}y} \;,
\end{equation}
that reduces to the basic ones generating $C(\T^2)$ when $\tau=i$. This because
the diffeomorphism $$z\mapsto z'=(x-\tfrac{\omega_x}{\omega_y}y)+i\tfrac{1}{\omega_y}y$$
transforms $\Lambda$ into $\Z+i\Z$, and $E_\tau$ into the torus $\T^2$.

\begin{prop}\label{prop:hat}
$\HH_{\tau,p}$ is a right pre-Hilbert $C^\infty(E_\tau)$-module, isomorphic to $\mc{F}_{\tau,p}$
via the map $\varphi_{\tau,p}$, if we define the module structure (using Weyl operators) as
\begin{equation}\label{eq:modstru}
\u{f}\za u_\tau:=\big\{W(\tfrac{\omega_y}{p},-\tfrac{\omega_x}{\omega_y})\otimes S\big\}\u{f}
\;,\qquad
\u{f}\za v_\tau:=\big\{W(0,\tfrac{1}{\omega_y})\otimes C^*\big\}\u{f} \;,
\end{equation}
where $C,S\in M_p(\C)$ are the clock and shift operators in \eqref{eq:CS}.

The pullback with $\varphi_{\tau,p}$ of the canonical Hermitian structure of $\mc{F}_{\tau,p}$
is the Hermitian structure on $\HH_{\tau,p}$ given by
\begin{equation}\label{eq:herstr}
\inner{\u{f},\u{g}}=
\frac{1}{\omega_y}\sum_{m,n\in\Z}
u_\tau^mv_\tau^n
\int_{-\infty}^\infty
(\u{f}|\u{g}\za u_\tau^{-m}v_\tau^{-n})_t\,
\de t \;.
\end{equation}
where $(\,|\,):\HH_{\tau,p}\times\HH_{\tau,p}\to \mathcal{S}(\R)$ is
the canonical Hermitian structure of $\HH_{\tau,p}=\mathcal{S}(\R)\otimes\C^p$
as a right pre-Hilbert $\mathcal{S}(\R)$-module, given by \eqref{eq:canher}.
\end{prop}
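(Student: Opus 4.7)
The map $\varphi_{\tau,p}$ of Proposition \ref{prop:3.2} is a bijection of vector spaces, and $\mc{F}_{\tau,p}$ is already a right pre-Hilbert $C^\infty(E_\tau)$-module with the canonical Hermitian form $\inner{f,g}_{\mc{F}}:=\overline{f}g\in C^\infty(E_\tau)$. I therefore transport both structures to $\HH_{\tau,p}$ along $\varphi_{\tau,p}$. Associativity, linearity, positivity etc.\ come for free from the bijection; what must be verified is that the transported action, evaluated on the generators $u_\tau,v_\tau$ of $C^\infty(E_\tau)$, matches the Weyl-operator formulas \eqref{eq:modstru}, and that the transported Hermitian form equals the series \eqref{eq:herstr}.

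\textbf{Step 1: the module structure.} I substitute the expansion \eqref{eq:series} into $(f\cdot u_\tau)(z)=f(z)\hspace{1pt}e^{2\pi ix}e^{-2\pi i\omega_x y/\omega_y}$, shift $n\mapsto n+1$ so the exponential $e^{2\pi inx}$ is restored, and pass to $t=y+n\omega_y/p$. Collecting the $\omega_x$-phases via $(n-1)^2-n^2=-2n+1$, the $[n]$-component of $\varphi_{\tau,p}(f\cdot u_\tau)$ becomes $e^{\pi i\omega_x/p}e^{-2\pi i\omega_x t/\omega_y}f_{[n-1]}(t-\omega_y/p)$, which by \eqref{eq:WeylOP} and the cyclic shift $S$ is precisely $\{W(\omega_y/p,-\omega_x/\omega_y)\otimes S\}\u{f}$. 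The case of $v_\tau$ is shorter: multiplying \eqref{eq:series} by $e^{2\pi iy/\omega_y}$ and substituting $t=y+n\omega_y/p$ yields $e^{2\pi it/\omega_y}e^{-2\pi i[n]/p}f_{[n]}(t)$ (using $e^{-2\pi in/p}=e^{-2\pi i[n]/p}$), i.e.\ $\{W(0,1/\omega_y)\otimes C^*\}\u{f}$. The relation $u_\tau v_\tau=v_\tau u_\tau$ is automatically respected, since by \eqref{eq:Weylprod} one has $W(\omega_y/p,-\omega_x/\omega_y)W(0,1/\omega_y)=e^{-2\pi i/p}W(0,1/\omega_y)W(\omega_y/p,-\omega_x/\omega_y)$, compensating the analogous failure $SC^*=e^{2\pi i/p}C^*S$.

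\textbf{Step 2: the Hermitian structure.} Expand $\overline{f}g\in C^\infty(E_\tau)$ in Fourier series $\sum_{m,n}c_{m,n}u_\tau^m v_\tau^n$. It suffices to show
\begin{equation*}
c_{m,n}(f,g)=\tfrac{1}{\omega_y}\int_{-\infty}^\infty\bigl(\u{f}\,\big|\,\u{g}\za u_\tau^{-m}v_\tau^{-n}\bigr)_t\,\de t,
\end{equation*}
since reinserting this in the Fourier series reproduces \eqref{eq:herstr}. The general case reduces to $(m,n)=(0,0)$ via $\overline{f}(gu_\tau^{-m}v_\tau^{-n})=(\overline{f}g)u_\tau^{-m}v_\tau^{-n}$, which gives $c_{m,n}(f,g)=c_{0,0}(f,g u_\tau^{-m}v_\tau^{-n})$, combined with the intertwining $\varphi_{\tau,p}(gu_\tau^{-m}v_\tau^{-n})=\u{g}\za u_\tau^{-m}v_\tau^{-n}$ from Step~1. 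For the constant term I use the (non-standard) fundamental domain $D=[0,1]\times[0,\omega_y]$ for $\Lambda$: one verifies directly that every point of $\C$ has a unique representative in $D$ modulo $\Lambda$ and that $D$ has area $\omega_y$, whence $c_{0,0}(f,g)=\frac{1}{\omega_y}\int_D\overline{f}g\,\de x\de y$. Plugging \eqref{eq:series} into both factors and integrating over $x\in[0,1]$ kills all cross-terms $n'\neq n$ by Fourier orthogonality, leaving $\int_0^{\omega_y}\sum_n\overline{f_{[n]}(y+n\omega_y/p)}g_{[n]}(y+n\omega_y/p)\,\de y$. Writing $n=[n]+kp$ and observing that, as $k\in\Z$ varies and $y$ ranges over $[0,\omega_y]$, the point $y+[n]\omega_y/p+k\omega_y$ tiles $\R$, this assembles into $\sum_{[n]}\int_\R\overline{f_{[n]}(t)}g_{[n]}(t)\,\de t=\int_\R(\u{f}\,|\,\u{g})_t\,\de t$, as required. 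The main bookkeeping obstacle is this decomposition $n=[n]+kp$ together with the choice of rectangular fundamental domain; once set up, both steps are direct computations.
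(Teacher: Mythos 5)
Your proposal is correct, and Step 1 (the verification of \eqref{eq:modstru} on the generators, including the consistency check that the Weyl/clock--shift operators reproduce the commutativity $u_\tau v_\tau=v_\tau u_\tau$) coincides with the paper's computation. Step 2, however, is organized differently. The paper expands $f^*g$ as a double series in $e^{2\pi ikx}$, converts the $y$-dependence into the $v_\tau^n$ Fourier modes via the identity \eqref{eq:Fy}, recognizes the resulting integrand term-by-term as the $[m]$-th component of $\u{g}\za u_\tau^{-k}v_\tau^{-n}$ paired with $\u{f}^*$, and only at the end reassembles $\sum_{s}\int_0^{\omega_y}$ into $\int_\R$; all Fourier coefficients are thus identified simultaneously in one computation. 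You instead prove only the $(0,0)$ coefficient formula directly --- integrating $\overline{f}g$ over the rectangular fundamental domain $[0,1)\times[0,\omega_y)$ (which is indeed a fundamental domain for $\Z+\tau\Z$, of area $\omega_y$), using orthogonality in $x$ and the tiling $n=[n]+kp$ --- and obtain the general coefficient from the covariance $c_{m,n}(f,g)=c_{0,0}(f,\,g\hspace{1pt}u_\tau^{-m}v_\tau^{-n})$ together with the intertwining property of $\varphi_{\tau,p}$ established in Step 1. This reduction-by-equivariance is conceptually cleaner: it isolates the one genuinely new computation (the zero mode) and derives the rest from the module axioms, whereas the paper's version requires tracking the Weyl phases through the full double sum but produces the explicit intermediate identity for $\u{g}\za u_\tau^{-k}v_\tau^{-n}$ that is reused later in \S\ref{sec:5}. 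Both arguments rest on the same two ingredients (Fourier orthogonality and the decomposition $n=[n]+kp$ with $\sum_k\int_0^{\omega_y}=\int_\R$), so the proposal is a valid, mildly streamlined alternative.
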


\begin{proof}
From \eqref{eq:series} and \eqref{eq:uv} we get
\begin{align*}
v_\tau(z)f(z) &=e^{2\pi i\frac{1}{\omega_y}y}\sum_{n\in\Z}e^{2\pi inx}
e^{\pi in^2\frac{\omega_x}{p}} f_{[n]}(y+n\tfrac{\omega_y}{p}) 
\\
&=\sum_{n\in\Z}e^{2\pi inx}
e^{\pi in^2\frac{\omega_x}{p}}
e^{-2\pi in/p}
e^{2\pi i\frac{1}{\omega_y}(y+n\frac{\omega_y}{p})} f_{[n]}(y+n\tfrac{\omega_y}{p})
\\
&=\sum_{n\in\Z}e^{2\pi inx}
e^{\pi in^2\frac{\omega_x}{p}} f'_{[n]}(y+n\tfrac{\omega_y}{p}) 
\;,
\end{align*}
where
$$
 f'_{[n]}(y)=
e^{-2\pi in/p}e^{2\pi i\frac{1}{\omega_y}y} f_{[n]}(y) \;.
$$
If we call $\u{f}=\varphi_{\tau,p}(f)=( f_{[1]}, f_{[2]},\ldots, f_{[p]})^t$,
then one can check that $\u{f}'=\varphi_{\tau,p}(v_\tau f)=( f'_{[1]}, f'_{[2]},\ldots, f'_{[p]})^t
=\big\{W(0,\tfrac{1}{\omega_y})\otimes C^*\big\}\u{f}$, that is the second formula
in \eqref{eq:modstru}. Similarly,
\begin{align*}
u_\tau(z)f(z) &=e^{2\pi i(x-\frac{\omega_x}{\omega_y}y)}\sum_{n\in\Z}e^{2\pi inx}
e^{\pi in^2\frac{\omega_x}{p}} f_{[n]}(y+n\tfrac{\omega_y}{p}) \\
&=\sum_{n\in\Z}e^{2\pi i(n+1)x}
e^{\pi i(n+1)^2\frac{\omega_x}{p}}
e^{-\pi i\frac{\omega_x}{p}}
e^{-2\pi i\frac{\omega_x}{\omega_y}(y+n\frac{\omega_y}{p})}
 f_{[n]}(y+n\tfrac{\omega_y}{p}) \\
&=\sum_{n'\in\Z}e^{2\pi in'x}
e^{\pi in'^2\frac{\omega_x}{p}}
e^{\pi i\frac{\omega_x}{p}}
e^{-2\pi i\frac{\omega_x}{\omega_y}(y+n'\tfrac{\omega_y}{p})}
 f_{[n'-1]}(y-\tfrac{\omega_y}{p}+n'\tfrac{\omega_y}{p})
\;,
\end{align*}
where $n'=n+1$. If we call
$$
 f''_{[n]}(y)=
e^{\pi i\frac{\omega_x}{p}}
e^{-2\pi i\frac{\omega_x}{\omega_y}y}
 f_{[n-1]}(y-\tfrac{\omega_y}{p})
\;,
$$
then
$$
u_\tau(z)f(z)=\sum_{n\in\Z}e^{2\pi inx}
e^{\pi in^2\frac{\omega_x}{p}} f''_{[n]}(y+n\tfrac{\omega_y}{p}) \;.
$$
One can check that 
$$
 f''_{[n]}=W(\tfrac{\omega_y}{p},-\tfrac{\omega_x}{\omega_y}) f_{[n-1]} \;,
$$
proving the first equation in \eqref{eq:modstru}.

It remains compute $f^*g$. Using \eqref{eq:series} again we get:
\begin{align*}
(f^*g)(z) &=\sum_{m,n\in\Z}e^{2\pi i(n-m)x}
e^{\pi i(n^2-m^2)\frac{\omega_x}{p}}
 f_{[m]}^*(y+m\tfrac{\omega_y}{p})
 g_{[n]}(y+n\tfrac{\omega_y}{p}) \\
&=\sum_{k,m\in\Z}e^{2\pi ikx}
e^{\pi i(k^2+2km)\frac{\omega_x}{p}}
 f_{[m]}^*(y+m\tfrac{\omega_y}{p})
 g_{[m+k]}(y+(m+k)\tfrac{\omega_y}{p}) \;,
\end{align*}
where we called $n=m+k$.
This can be rewritten as follows. Using the identity
\begin{equation}\label{eq:Fy}
F(y)=\frac{1}{\omega_y}\sum_{n\in\Z}e^{2\pi in\frac{1}{\omega_y}y}\int_0^{\omega_y}
e^{-2\pi in\frac{1}{\omega_y}t} F(t)\de t
\end{equation}
we get
$$
f^*g=\frac{1}{\omega_y}\sum_{k,m,n\in\Z}u_\tau^k v_\tau^n
e^{\pi i(k^2+2km)\frac{\omega_x}{p}}
\int_0^{\omega_y}
e^{2\pi i\frac{1}{\omega_y}(k\omega_x-n)t}
 f_{[m]}^*(t+m\tfrac{\omega_y}{p})
 g_{[m+k]}(t+(m+k)\tfrac{\omega_y}{p})\de t \;.
$$
Let $\u{f}=( f_{[0]}, f_{[1]},\ldots, f_{[p-1]})^t$ as above,
and $\u{g}=( g_{[0]}, g_{[1]},\ldots, g_{[p-1]})^t$.
Note that
\begin{align*}
\u{g}\za u_\tau^{-k}v_\tau^{-n} &=
\big\{W(\tfrac{\omega_y}{p},-\tfrac{\omega_x}{\omega_y})\otimes S\big\}^{-k}
\big\{W(0,\tfrac{1}{\omega_y})\otimes C^*\big\}^{-n}\u{g} \\
&=\big\{W(-\tfrac{k\omega_y}{p},\tfrac{k\omega_x}{\omega_y})
W(0,-\tfrac{n}{\omega_y})\otimes S^{-k}C^n\big\}\u{g} \\
&=e^{-\pi i kn/p}\big\{W(-\tfrac{k\omega_y}{p},\tfrac{k\omega_x-n}{\omega_y})\otimes S^{-k}C^n\big\}\u{g} \;,
\end{align*}
where we used \eqref{eq:Weylprod}. The $[m]$-th component of
$\u{g}\za u_\tau^{-k}v_\tau^{-n}$ evaluated in $t+m\tfrac{\omega_y}{p}$ is
\begin{multline*}
e^{\pi i\frac{n}{p}(2m+k)}
\big\{W(-\tfrac{k\omega_y}{p},\tfrac{k\omega_x}{\omega_y}
-\tfrac{n}{\omega_y}) g_{[m+k]}\big\}(t+m\tfrac{\omega_y}{p})
\\[5pt]
=
e^{\pi i(k^2+2km)\frac{\omega_x}{p}}
e^{2\pi i\frac{1}{\omega_y}(k\omega_x-n)t}
 g_{[m+k]}(t+(m+k)\tfrac{\omega_y}{p}) \;.
\end{multline*}
Hence
$$
f^*g=
\frac{1}{\omega_y}\sum_{k,m,n\in\Z}u_\tau^kv_\tau^n\int_0^{\omega_y}
\{\u{g}\za u_\tau^{-k}v_\tau^{-n}\cdot\u{f}^*\}_{[m]}(t+m\tfrac{\omega_y}{p})\de t \;.
$$
Let $m=r+sp$ with $r=0,\ldots,p-1$ and $s\in\Z$. Using
$$
\sum_{s\in\Z}\int_0^{\omega_y}F(t+s\omega_y)\de t=\int_{-\infty}^\infty F(t)\de t
$$
we get
$$
f^*g=
\frac{1}{\omega_y}\sum_{k,n\in\Z}u_\tau^kv_\tau^n\int_{-\infty}^{\infty}
\sum_{r=0}^{p-1}\{\u{g}\za u_\tau^{-k}v_\tau^{-n}\cdot\u{f}^*\}_{[r]}(t+r\tfrac{\omega_y}{p})\de t \;.
$$
By translation invariance of the measure, we can replace $t+r\tfrac{\omega_y}{p}$
by $t$ and get $f^*g=\inner{\u{f},\u{g}}$, where $\inner{\u{f},\u{g}}$ is given by \eqref{eq:herstr}.
\end{proof}


\subsection{Some remark on holomorphic quasi-periodic functions}\label{sec:4.3}
If $\alpha$ is any factor of automorphy, there is a corresponding line bundle $L_\alpha\to E_\tau$ with
total space $L_\alpha=\C\times\C/\!\sim$, where the equivalence relation is
$$
(z+\lambda,w)\sim (z,\alpha(\lambda,z)w) \;, \qquad \forall\;z,w\in\C,\lambda\in\Lambda,
$$
and with projection sending the class of $(z,w)$ to the class of $z$ in $E_\tau$.
The set of smooth sections of this line bundle can be identified with the set \eqref{eq:smsec}.
It is easy to characterize which $L_\alpha$ have (non-zero) holomorphic sections.

\begin{prop}
If $\,\Gamma_\alpha\!$ contains non-zero holomorphic functions, then there exists an open
set where all the functions $z\mapsto\alpha(\lambda,z)$ are holomorphic (for all $\lambda\in\Lambda$).
\end{prop}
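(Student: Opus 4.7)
The plan is to read off the holomorphy of the factor $\alpha(\lambda,\cdot)$ directly from a single non-zero holomorphic quasi-periodic section. First I would fix a holomorphic $f \in \Gamma_\alpha$ with $f \not\equiv 0$ and rewrite the defining condition $\pi_\alpha(\lambda)f = f$ from \eqref{eq:pi} in the equivalent form
\[
f(z+\lambda) \;=\; \alpha(\lambda, z)\, f(z), \qquad \forall\; z \in \C,\; \lambda \in \Lambda .
\]

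The next step is to isolate a region where $f$ does not vanish. Since $f$ is holomorphic and not identically zero, the zero set $Z(f) := \{z \in \C : f(z) = 0\}$ is a discrete closed subset of $\C$, and so $U := \C \setminus Z(f)$ is open and (in fact dense) non-empty. As a sanity check one can notice that, because $\alpha$ takes values in $\C^*$, the identity above forces $Z(f)$ to be $\Lambda$-invariant; this is not strictly needed for the argument, but it matches the geometric picture of $f$ as a section of a line bundle whose divisor is $\Lambda$-periodic.

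Finally, on $U$ the denominator $f(z)$ never vanishes, so for each $\lambda \in \Lambda$ one can solve
\[
\alpha(\lambda, z) \;=\; \frac{f(z+\lambda)}{f(z)}, \qquad z \in U .
\]
The right-hand side is a ratio of two holomorphic functions of $z$ (the numerator is holomorphic because $f$ is and $\lambda$ is a constant shift) with nowhere-vanishing denominator on $U$, hence is holomorphic on $U$ for every $\lambda \in \Lambda$, which is exactly the statement to be proved. There is no real obstacle here: the entire argument is essentially the one-line observation that a single non-zero holomorphic section of $L_\alpha$ already computes the factor of automorphy via $f(z+\lambda)/f(z)$ wherever it does not vanish, and that holomorphy of $\alpha(\lambda,\cdot)$ then comes for free from the quotient rule.
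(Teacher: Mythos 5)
Your proof is correct and is essentially the same as the paper's: both arguments exploit the relation $f(z+\lambda)=\alpha(\lambda,z)f(z)$ on the open set where the non-zero holomorphic section $f$ does not vanish. The only cosmetic difference is that you solve for $\alpha(\lambda,z)=f(z+\lambda)/f(z)$ and invoke the quotient rule, while the paper applies $\bar\partial$ to the relation and concludes $(\bar\partial\alpha)f=0$; the observation about the discreteness of $Z(f)$ is not needed.
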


\begin{proof}
Let $\bar\partial=\frac{1}{2}(\partial_x+i\partial_y)$ and suppose $f\in\Gamma_\alpha$ is holomorphic and not identically zero.
By deriving the relation $f(z+\lambda)=\alpha(\lambda,z)f(z)$ one gets
$$
(\bar\partial\alpha)f=0 \;,
$$
hence there exists an open set (the support of $f$), independent of $\lambda$,
where $\bar\partial\alpha(\lambda,z)=0$ for all $\lambda\in\Lambda$.
\end{proof}

As a corollary, the unitary factor of automorphy $\beta^p$, with $\beta$ as in \eqref{eq:beta},
gives a module $\mathcal{F}_{\tau,p}=\Gamma_{\beta^p}$ 
with no non-zero holomorphic elements, for any $p\in\Z\smallsetminus\{0\}$.
The $C^\infty(E_\tau)$-module map $f\mapsto e^{\pi py^2/\omega_y}f$ sends $\mathcal{F}_{\tau,p}=\Gamma_{\beta^p}$ to the module
$\Gamma_{\alpha^p}$, with $\alpha$ as in \eqref{eq:alpha}. It is well-known, and easy to check explicitly
(see Prop.~\ref{prop:holel} below), that $\Gamma_{\alpha^p}$ has non-zero holomorphic elements, proving that the
line bundles $L_{\alpha^p}$ and $L_{\beta^p}$ are isomorphic as smooth line bundles, but not as holomorphic line bundles.
Similarly to \eqref{eq:series}, we have:

\begin{prop}\label{prop:holel}
Every $f\in\Gamma_{\alpha^p}$ is of the form
\begin{equation}\label{eq:seriesHol}
f(z)=e^{\pi py^2/\omega_y}\sum_{n\in\Z}e^{2\pi inx}
e^{\pi in^2\frac{\omega_x}{p}}f_{[n]}(y+n\tfrac{\omega_y}{p}) \;,
\end{equation}
for some Schwartz functions $f_{[1]},\ldots,f_{[p]}\in\mathcal{S}(\R)$.
\end{prop}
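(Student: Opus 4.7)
The plan is to deduce this essentially for free from Proposition \ref{prop:3.2} by transferring through the $C^\infty(E_\tau)$-module isomorphism $\Phi\colon \mathcal{F}_{\tau,p}=\Gamma_{\beta^p}\to\Gamma_{\alpha^p}$, $g\mapsto e^{\pi p y^2/\omega_y}g$, mentioned in the paragraph preceding the statement. Given $f\in\Gamma_{\alpha^p}$, the idea is to set $g:=\Phi^{-1}(f)=e^{-\pi p y^2/\omega_y}f$, expand $g$ by Proposition \ref{prop:3.2}, and multiply back by $e^{\pi p y^2/\omega_y}$.

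First I would verify that $g\in\mathcal{F}_{\tau,p}$. Smoothness is automatic since $e^{-\pi p y^2/\omega_y}$ is smooth and nowhere zero. The only nontrivial point is the quasi-periodicity \eqref{eq:expl}. The condition $g(z+1)=g(z)$ follows at once from $f(z+1)=f(z)$ and the fact that $y$ is unchanged under $z\mapsto z+1$. For the condition $g(z+\tau)=e^{-\pi ip(\omega_x+2x)}g(z)$, starting from the transformation rule $f(z+\tau)=\alpha^p(\tau,z)f(z)=e^{-\pi ip\tau}e^{-2\pi ipz}f(z)$ and expanding
$$\frac{e^{-\pi p(y+\omega_y)^2/\omega_y}}{e^{-\pi p y^2/\omega_y}}=e^{-2\pi py}\,e^{-\pi p\omega_y},$$
one uses $\tau=\omega_x+i\omega_y$ and $z=x+iy$ to combine the factors into
$$e^{-\pi ip\tau}e^{-2\pi ipz}\cdot e^{-2\pi py}e^{-\pi p\omega_y}=e^{-\pi ip(\omega_x+2x)},$$
which is exactly the required transformation law for $\Gamma_{\beta^p}$.

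With $g\in\mathcal{F}_{\tau,p}$ in hand, Proposition \ref{prop:3.2} applies and produces (unique) Schwartz functions $f_{[1]},\ldots,f_{[p]}\in\mathcal{S}(\R)$ with
$$g(z)=\sum_{n\in\Z}e^{2\pi inx}e^{\pi in^2\omega_x/p}f_{[n]}(y+n\tfrac{\omega_y}{p}).$$
Multiplying both sides by $e^{\pi p y^2/\omega_y}$ and using $f=\Phi(g)$ yields the desired expansion \eqref{eq:seriesHol}. The only real ``obstacle'' is the elementary algebraic check in the previous paragraph, which is nothing but the identity $e^{-\pi in^2\tau}=e^{-\pi in^2\omega_x}e^{\pi n^2\omega_y}$ (for $n=1$) that exchanges the holomorphic, exponentially unbounded factor of automorphy $\alpha^p$ with the unitary factor $\beta^p$; once this is noted, no further analysis beyond what is already done in the proof of Proposition \ref{prop:3.2} is needed.
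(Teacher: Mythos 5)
Your proposal is correct and is exactly the argument the paper intends: the statement is given without proof immediately after the remark that $f\mapsto e^{\pi p y^2/\omega_y}f$ is a $C^\infty(E_\tau)$-module isomorphism from $\Gamma_{\beta^p}=\mathcal{F}_{\tau,p}$ onto $\Gamma_{\alpha^p}$, so transferring the expansion of Prop.~\ref{prop:3.2} through this map is precisely the intended route. Your verification that $g=e^{-\pi p y^2/\omega_y}f$ satisfies \eqref{eq:expl}, via $e^{-\pi ip\tau}e^{-2\pi ipz}e^{-2\pi py}e^{-\pi p\omega_y}=e^{-\pi ip(\omega_x+2x)}$, is the only computation needed and it checks out.
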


As a consequence,

\begin{prop}
Let $q=e^{\pi i\tau}$ and $H^0(\Gamma_{\alpha^p},\bar\partial)$ the set of holomorphic
functions in $\Gamma_{\alpha^p}$.
If $p>0$, every $f\in H^0(\Gamma_{\alpha^p},\bar\partial)$ is of the form
$$
f(z)=\sum_{n\in\Z}c_{[n]}q^{n^2/p}e^{2\pi inz} \;,
$$
where $c_{[n]}$ are complex numbers. 
The dimension of $H^0(\Gamma_{\alpha^p},\bar\partial)$ is $p$ if $p>0$, and $0$ if $p<0$.
\end{prop}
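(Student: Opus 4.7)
The plan is to start from the explicit parametrization of $\Gamma_{\alpha^p}$ given by Prop.~\ref{prop:holel} and impose the Cauchy-Riemann equation $\bar\partial f=0$ term by term, thereby reducing the problem to a first-order linear ODE for each Schwartz component $f_{[n]}$.

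Concretely, I would write $f\in\Gamma_{\alpha^p}$ as in \eqref{eq:seriesHol}, and apply $\bar\partial=\tfrac12(\partial_x+i\partial_y)$ to each summand. A short computation, after the substitution $u=y+n\omega_y/p$, collapses the $n$-dependent Gaussian contribution and yields
\[
\bar\partial f
=\tfrac{i}{2}e^{\pi py^2/\omega_y}\sum_{n\in\Z}e^{2\pi inx}e^{\pi in^2\omega_x/p}
\Big[f'_{[n]}(u)+\tfrac{2\pi pu}{\omega_y}f_{[n]}(u)\Big]_{u=y+n\omega_y/p}.
\]
The characters $e^{2\pi inx}$ are orthogonal on $[0,1]$, so $\bar\partial f=0$ forces, for every $n\in\Z$,
\[
f'_{[n]}(u)+\tfrac{2\pi pu}{\omega_y}f_{[n]}(u)=0,
\]
a single ODE per residue class $[n]\in\{0,\ldots,p-1\}$. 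Its general solution is the Gaussian $f_{[n]}(u)=c_{[n]}\,\exp(-\pi pu^2/\omega_y)$.

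Now the sign of $p$ enters. For $p>0$ this Gaussian is Schwartz, so there are $p$ free parameters $c_{[0]},\ldots,c_{[p-1]}$; for $p<0$ the solution grows as $e^{\pi|p|u^2/\omega_y}$, which is Schwartz only if $c_{[n]}=0$, giving $H^0=0$. In the case $p>0$, substituting back into \eqref{eq:seriesHol} and expanding
\[
e^{\pi py^2/\omega_y}\cdot e^{-\pi p(y+n\omega_y/p)^2/\omega_y}
=e^{-2\pi ny}\,e^{-\pi n^2\omega_y/p},
\]
the factor $e^{2\pi inx}e^{-2\pi ny}$ combines into $e^{2\pi inz}$, while $e^{\pi in^2\omega_x/p}\,e^{-\pi n^2\omega_y/p}=e^{\pi in^2\tau/p}=q^{n^2/p}$, producing exactly $f(z)=\sum_{n\in\Z}c_{[n]}\,q^{n^2/p}\,e^{2\pi inz}$. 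The map $f\mapsto(c_{[0]},\ldots,c_{[p-1]})$ is clearly a linear isomorphism onto $\C^p$.

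The only part that requires mild care is the passage from the vanishing of the full $\bar\partial f$ series to the vanishing of each Fourier mode; this is immediate because, for fixed $y$, the quantity in brackets multiplied by $e^{\pi in^2\omega_x/p}$ is the $n$th Fourier coefficient in $x\in\R/\Z$ of the smooth periodic function $e^{-\pi py^2/\omega_y}\cdot(-2i)\bar\partial f$. Beyond that, the proof is just the bookkeeping of Gaussians and phases outlined above.
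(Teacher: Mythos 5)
Your proposal is correct and follows essentially the same route as the paper: apply $\bar\partial$ to the expansion \eqref{eq:seriesHol}, reduce mode by mode to the ODE $(\tfrac{2\pi p}{\omega_y}u+\partial_u)f_{[n]}(u)=0$, and observe that its Gaussian solution is Schwartz precisely when $p>0$. The bookkeeping of phases leading to $f(z)=\sum_n c_{[n]}q^{n^2/p}e^{2\pi inz}$ also matches the paper's computation.
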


\begin{proof}
From \eqref{eq:seriesHol} we get
$$
2\bar\partial f(z)=ie^{\pi ipy^2/\omega_y}\sum_{n\in\Z}e^{2\pi inx}
e^{\pi in^2\frac{\omega_x}{p}}
(\tfrac{2\pi p}{\omega_y} y+2\pi n+\partial_y)
f_{[n]}(y+n\tfrac{\omega_y}{p}) \;.
$$
Thus $\bar\partial f=0$ if and only if
$$
(\tfrac{2\pi p}{\omega_y} y+\partial_y)f_{[n]}(y)=0
$$
for all $y\in\R$. The general solution is $f_{[n]}(y)=c_{[n]}e^{-\pi py^2/\omega_y}$, 
which is Schwartz if and only if $p>0$,
hence the thesis.
\end{proof}


\subsection{Connections and local trivialization}

A connection $\nabla$ on $\mathcal{F}_{\tau,p}$ 
is given by $\nabla f=(\nabla_1f)\de x+(\nabla_2f)\de y$, where
$\nabla_1$ and $\nabla_2$ are given in \eqref{eq:nabla}.
One can explicitly check that the Leibniz rule is satisfied (hence, the above
formulas define indeed a connection).
The corresponding connection $1$-form is $\omega=\frac{2\pi ip}{\omega_y}y\hspace{1pt}\de x$
(living on the covering $\C$ of $E_\tau$) and the curvature is
$$
\Omega=\de\omega=-\frac{2\pi ip}{\omega_y}\de x\wedge\de y\;.
$$
Integrating over a fundamental domain we get
$-\tfrac{2\pi ip}{\omega_y}$ times the area of the parallelogram
with vertices $0$, $1$, $\tau$ and $1+\tau$ (that is equal to $\omega_y$). Thus
$$
\int_{E_\tau}\frac{i}{2\pi}\,\Omega=p \;,
$$
proving that the Chern number of the corresponding line bundle is $p$.

\smallskip

It is an interesting exercise to do a doublecheck using a local trivialization, since it
gives us as a byproduct an explicit formula for a projection representing the $K$-theory
class of the module $\mathcal{F}_{\tau,p}$. 
From Serre-Swan theorem, if $g_{jk}:U_j\cap U_k\to \C^*$ are transition
functions of the line bundle relative to a (finite) open cover $\{U_j\}_{j=1}^n$ of $E_\tau$,
and $\psi_j\in C(E_\tau)$ are such that $\{|\psi_j|^2\}_{j=1}^n$ is a partition of unity subordinated
to the cover, then an idempotent matrix is given by (no summation implied):
$$
P=(\psi_jg_{jk}\overline{\psi}_k) \;.
$$
The matrix elements of $P$ are global continuous functions on $E_\tau$, even if $g_{jk}$ in
general are not. The cocycle condition for the transition functions guarantees that $P^2=P$,
and one can prove that the module of sections of the line bundle is isomorphic to the finitely
generated projective module associated to $P$~\cite{Swan62}.
Choosing smooth transition functions and partition of unity, one gets a
smooth idempotent. If $g_{jk}$ have values in $U(1)$ rather than $\C^*$,
the idempotent is a projection.

\smallskip

Since here the complex structure is irrelevant, to simplify the discussion let us set $\tau=i$.
The fundamental domain is then a square,
and we can identify $\T^2$ with \mbox{$[0,1]\times [0,1]/\!\!\sim$}, where the equivalence
relation is $(x,0)\sim (x,1)$ and $(0,y)\sim (1,y)$ for all $x,y$. 
The line bundle with factor of automorphy $\beta^p$ can be described as follows.
The total space is \mbox{$[0,1]\times [0,1]\times\C/\!\!\sim$}, where
$(0,y,w)\sim (1,y,w)$ and $(x,0,w)\sim (x,1,e^{-2\pi ipx}w)$
for all $x,y,w$.
We choose two charts $U_1=\{0<y<1\}$ and $U_2=\{y\neq\frac{1}{2}\}$ on $\T^2$.
Two local sections $s_1$ and $s_2$ are defined as follows:
\begin{align*}
s_1 &: U_1\to\C \;,\qquad s_1(x,y)=1 \\
s_2 &: U_2\to\C \;,\qquad s_2(x,y)=\begin{cases}
1 &\mathrm{if}\;0\leq y<\frac{1}{2},\\
e^{2\pi ipx} &\mathrm{if}\;\frac{1}{2}<y\leq 1.
\end{cases}
\end{align*}
Since $s_i(0,y)=s_i(1,y)$, for $i=1,2$, and $s_2(x,0)=e^{-2\pi ipx}s_2(x,1)$,
the sections are well defined. From these, we can compute the transition function of the bundle.
On $U_1\cap U_2$, $s_1(x,y)=g(x,y)s_2(x,y)$ where the transition function is
$$
g(x,y)=\begin{cases}
1 &\mathrm{if}\;0<y<\frac{1}{2},\\
e^{2\pi ipx} &\mathrm{if}\;\frac{1}{2}<y<1.
\end{cases}
$$
For any smooth choice of $\{\psi_1,\psi_2\}$, if we call $\Psi=(\psi_1,g\psi_2)^t$,
the corresponding projection is $P=\Psi\Psi^\dag$, with row-by-column multiplication understood.
The Grasmannian connection has connection $1$-form
\begin{multline*}
\Psi^\dag\de \Psi=
\overline\psi_1\de\psi_1+
\overline\psi_2\de\psi_2+
|\psi_2|^2 g^*\de g \\[3pt]
=\tfrac{1}{2}\de(|\psi_1|^2+|\psi_2|^2)+|\psi_2|^2g^*\de g=
\tfrac{1}{2}\de 1+|\psi_2|^2g^*\de g=|\psi_2|^2g^*\de g \;.
\end{multline*}
An explicit computation gives
$$
\Psi^\dag\de \Psi=
\begin{cases}
0 &\mathrm{if}\;0<y\leq \frac{1}{2},\\
2\pi i |\psi_2|^2 p\hspace{1pt}\de x &\mathrm{if}\;\frac{1}{2}\leq y<1.
\end{cases}
$$
Note that this is only well defined on the chart $U_1$
(it's zero for $y=\frac{1}{2}$).
The curvature is
$$
\Omega=
\begin{cases}
0 &\mathrm{if}\;0<y\leq\frac{1}{2},\\
-2\pi ip\,\de x\wedge\de  |\psi_2|^2 &\mathrm{if}\;\frac{1}{2}\leq y<1.
\end{cases}
$$
Since $\psi_j$ vanishes outside $U_j$ and $|\psi_1(y)|^2+|\psi_2(y)|^2=1$,
then $\psi_2(\frac{1}{2})=0$ and from 
$\psi_1(1)=0$ we get $\psi_2(1)^2=1$. Then:
$$
\int_{\T^2}\frac{i}{2\pi}\,\Omega=p
\int_0^1\de x\int_{1/2}^1\de |\psi_2|^2=
p\,|\psi_2(1)|^2-p\,|\psi_2(\tfrac{1}{2})|^2=p \;,
$$
confirming that the degree is $p$.


\section{Vector bundles on the noncommutative torus}\label{sec:5}
We now come back to the pre $C^*$-algebra $A^\infty_\theta$. 
As one can easily check, $C^\infty(\R^2)$ is a $A^\infty_\theta$-module,
with bimodule structure given on generators by:
\begin{equation}\label{eq:bimodB}
\begin{split}
(U\az f)(x,y) =e^{2\pi ix}f\big(x,y+\tfrac{1}{2}\theta\big) \;,&\qquad\quad
(f\za U)(x,y) =e^{2\pi ix}f\big(x,y-\tfrac{1}{2}\theta\big) \;,
\\[2pt]
(V\az f)(x,y) =e^{2\pi iy} f\big(x-\tfrac{1}{2}\theta,y\big) \;,&\qquad\quad
(f\za V)(x,y) =e^{2\pi iy}f\big(x+\tfrac{1}{2}\theta,y\big) \;.
\end{split}\hspace*{-10pt}
\end{equation}
Let $J$ be the antilinear involutive map:
$$
(Jf)(x,y)=\overline{f(-x,-y)} \;.
$$
As one can check on generators, conjugation by $J$ sends the algebra $A^\infty_\theta$
into its commutant, and in particular transforms the left action into the right action
and viceversa.

\begin{rem}
The relation between \eqref{eq:bimodB} and Moyal product \eqref{eq:Moyalprod} is the following.
The space $\B(\R^2)$ of bounded smooth functions with all derivatives bounded is an $A^\infty_\theta$
sub-bimodule of $C^\infty(\R^2)$.
For $a\in A^\infty_\theta$ and $f\in\B(\R^2)$, one can check that
$$
a\az f=T_\theta^{-1}(a)\ast_\theta f
\qquad
\text{and}
\qquad
f\za a=f\ast_\theta T_\theta^{-1}(a) \;,
$$
where $\ast_\theta$ is the product \eqref{eq:Moyalprod} and $T_\theta$ the
quantization map \eqref{eq:Ttheta}. In this case, the fact that \eqref{eq:bimodB} define
a bimodule is a consequence of associativity of Moyal product.
\end{rem}

We will focus on right modules, but similar results hold for left modules as well.
Although as one might expect the subspaces $\mc{F}_{i,p}\subset C^\infty(\R^2)$
are not submodules for the right module structure \eqref{eq:bimodB}, we can look for submodules
of the form $\mc{F}_{\tau,p}$ for some non-trivial value of the modular parameter $\tau$.

\begin{prop}\label{prop:mainres}
The vector space $\mc{F}_{\tau,p}$ is a right $A_\theta^\infty$-module if and only if
$$
\tau-\tfrac{p\theta}{2}i \in\Z+i\Z \;.
$$
\end{prop}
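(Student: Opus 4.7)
The plan is to reduce the statement to checking whether the two generators $U$ and $V$, acting on the right via \eqref{eq:bimodB}, preserve the quasi-periodicity conditions \eqref{eq:expl} defining $\mc{F}_{\tau,p}$. By the Remark, $C^\infty(\R^2)$ is already a right $A^\infty_\theta$-module, so $\mc{F}_{\tau,p}$ is a submodule iff it is stable under $\za U$ and $\za V$; stability under finite polynomials then follows by iteration, and stability under series $\sum_{m,n}a_{m,n}U^mV^n$ with rapid-decay coefficients follows from the fact that every $f\in\mc{F}_{\tau,p}$ is bounded (already observed after \eqref{eq:expl}), so the series acting on $f$ converges absolutely and uniformly, and quasi-periodicity survives uniform limits.

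Periodicity in $x$, $z\mapsto z+1$, is automatic for both $f\za U$ and $f\za V$ because the action shifts only one argument at a time and $f$ is $1$-periodic in $x$. The nontrivial check is the behaviour under $z\mapsto z+\tau$. Writing $\tau=\omega_x+i\omega_y$ and using \eqref{eq:expl} I would compute
\[
(f\za U)(x+\omega_x,y+\omega_y)=e^{2\pi i\omega_x}e^{-\pi ip(\omega_x+2x)}(f\za U)(x,y),
\]
since the action of $U$ shifts only $y$ (by $-\theta/2$), so no $\theta$-dependence appears. Comparing with \eqref{eq:expl}, $f\za U\in\mc{F}_{\tau,p}$ for all $f\in\mc{F}_{\tau,p}$ iff $e^{2\pi i\omega_x}=1$, i.e.\ $\omega_x\in\Z$.

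For $V$, the action shifts $x$ by $\theta/2$, which interacts with the $x$-dependent quasi-periodicity factor $e^{-\pi ip(\omega_x+2x)}$ and produces an extra $e^{-\pi ip\theta}$:
\[
(f\za V)(x+\omega_x,y+\omega_y)=e^{2\pi i\omega_y}e^{-\pi ip\theta}e^{-\pi ip(\omega_x+2x)}(f\za V)(x,y),
\]
so $f\za V\in\mc{F}_{\tau,p}$ iff $e^{2\pi i\omega_y-\pi ip\theta}=1$, that is, $\omega_y-\tfrac{p\theta}{2}\in\Z$.

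Combining the two conditions gives $\omega_x+i(\omega_y-\tfrac{p\theta}{2})=\tau-\tfrac{p\theta}{2}i\in\Z+i\Z$, which is exactly the stated criterion; necessity follows because for any nonzero $f\in\mc{F}_{\tau,p}$ the images $f\za U$ and $f\za V$ are again nonzero (the Weyl operators in \eqref{eq:bimodB} are invertible), so violation of either condition immediately produces an element outside $\mc{F}_{\tau,p}$. There is no conceptual obstacle; the only subtlety is tracking the extra phase $e^{-\pi ip\theta}$ that arises from the $x$-shift in the $V$-action, and this is exactly the source of the $p\theta/2$ in the final constraint.
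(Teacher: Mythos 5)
Your proof is correct and follows essentially the same route as the paper's: you check the behaviour of $f\za U$ and $f\za V$ under $z\mapsto z+1$ and $z\mapsto z+\tau$ using \eqref{eq:bimodB} and \eqref{eq:expl}, and extract exactly the two conditions $\omega_x\in\Z$ and $\omega_y-\tfrac{p\theta}{2}\in\Z$, with the same phases $e^{2\pi i\omega_x}$ and $e^{2\pi i\omega_y}e^{-\pi ip\theta}$ appearing. Your extra remarks on reducing to the generators and on necessity only make explicit what the paper leaves implicit.
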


\begin{proof}
We are looking for necessary and sufficient conditions on $\tau$ such that, for any
$f$ satisfying \eqref{eq:expl}, $f\za U$ and $f\za V$ satisfy \eqref{eq:expl} too.
If $f$ satisfies \eqref{eq:expl}, from \eqref{eq:bimodB} we get:
\begin{align*}
(f\za U)(z+1)    &=(f\za U)(z) \;,&
(f\za U)(z+\tau) &=e^{2\pi i\omega_x} e^{-\pi ip(\omega_x+2x)}(f\za U)(z) \;,\\[2pt]
(f\za V)(z+1)    &=(f\za V)(z) \;,&
(f\za V)(z+\tau) &=e^{2\pi i(\omega_y-\frac{p\theta}{2})}e^{-\pi ip(\omega_x+2x)}(f\za V)(z) \;,
\end{align*}
where as usual $z=x+iy$ and $\tau=\omega_x+i\omega_y$, and we used the property
\eqref{eq:expl} of $f$. So, the first condition in \eqref{eq:expl} is always
satisfied by $f\za U$ and $f\za V$, while the second is satisfied if and only if
$\omega_x$ and $\omega_y-\frac{p\theta}{2}$ are integers, that is what we wanted
to prove.
\end{proof}

Now that we established that the vector spaces in Prop.~\ref{prop:mainres} are
$A_\theta^\infty$-modules, we want to give a description in terms of Weyl
operators, in order to compare them with Def.~\ref{def:heismod}. In the
rest of this section, we assume that
$$
\tau=r+i(s+\tfrac{p\theta}{2})
$$
for some $r,s\in\Z$. Using the vector space isomorphism $\varphi_{\tau,p}:\mc{F}_{\tau,p}\to\HH_{\tau,p}:=\mc{S}(\R)\otimes\C^p$
of Prop.~\ref{prop:3.2} we transport the right module structure \eqref{eq:bimodB} to $\HH_{\tau,p}$.
Let
$$
\u{f}\bza a=\varphi_{\tau,p}\big(\varphi_{\tau,p}^{-1}(\u{f})\za a\big)
$$
for all $a\in A^\infty_\theta$
and $\u{f}\in\HH_{\tau,p}$.

\begin{prop}
For all $\u{f}\in\HH_{\tau,p}$,
\begin{equation}\label{eq:modstruTheta}
\u{f}\bza U=e^{\pi i\frac{r}{p}}\big\{W(\tfrac{s}{p}+\theta,0)\otimes (C^*)^rS\big\}\u{f}
\;,\qquad
\u{f}\bza V=\big\{W(0,1)\otimes (C^*)^s\big\}\u{f} \;,
\end{equation}
where $C,S\in M_p(\C)$ are the clock and shift operators.
\end{prop}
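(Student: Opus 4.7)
My plan is a direct computation that mirrors the proof of Prop.~\ref{prop:hat}, using the explicit series expansion from Prop.~\ref{prop:3.2}. Writing $\tau = \omega_x + i\omega_y$ with $\omega_x = r$ and $\omega_y = s + \tfrac{p\theta}{2}$, any $f \in \mc{F}_{\tau,p}$ has the form
$$
f(z) = \sum_{n\in\Z} e^{2\pi inx}\, e^{\pi in^2 r/p}\, f_{[n]}\bigl(y + n\tfrac{\omega_y}{p}\bigr),
$$
so the task is to apply $\za U$ and $\za V$ as defined in \eqref{eq:bimodB}, put the result back into this standard form, and read off the new Schwartz components.

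First I would handle $\za V$, which is easier: $(f\za V)(z) = e^{2\pi iy} f(x+\tfrac{\theta}{2}, y)$ multiplies each term by $e^{\pi in\theta} e^{2\pi iy}$. Converting $e^{2\pi iy}$ into $e^{2\pi i(t - n\omega_y/p)}$ with $t = y + n\omega_y/p$ and using $\omega_y/p = s/p + \theta/2$ yields the collapse
$$
e^{\pi in\theta}\, e^{-2\pi in\omega_y/p} = e^{-2\pi i n s/p},
$$
so $(f\za V)_{[n]}(t) = e^{-2\pi ins/p} e^{2\pi it} f_{[n]}(t)$. Since $W(0,1)$ acts as multiplication by $e^{2\pi it}$ and $(C^*)^s$ has diagonal entries $e^{-2\pi ins/p}$, this is exactly the second formula in \eqref{eq:modstruTheta}.

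Next I would treat $\za U$: $(f\za U)(z) = e^{2\pi ix} f(x, y - \tfrac{\theta}{2})$ reindexes the series via $n' = n+1$. Extracting the $n'$-th coefficient and then shifting the argument by $-n'\omega_y/p$ to match the standard form produces
$$
(f\za U)_{[n']}(y) = e^{\pi i r/p} e^{-2\pi i n' r/p}\, f_{[n'-1]}\bigl(y - \tfrac{\omega_y}{p} - \tfrac{\theta}{2}\bigr),
$$
where again $\omega_y/p + \theta/2 = s/p + \theta$ is the key simplification supplied by the hypothesis $\tau - \tfrac{p\theta}{2}i \in \Z+i\Z$. The translation by $s/p + \theta$ is precisely $W(\tfrac{s}{p} + \theta, 0)$, the index shift $[n'-1] \mapsto [n']$ is the action of $S$, and the diagonal phase $e^{-2\pi in'r/p}$ is $(C^*)^r$, with the overall prefactor $e^{\pi i r/p}$ matching the first formula in \eqref{eq:modstruTheta}.

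The main obstacle is just bookkeeping: one must correctly track the three layers of phases (the intrinsic $e^{\pi i n^2 r/p}$ in the Ansatz, the multiplicative $e^{2\pi ix}$ or $e^{2\pi iy}$ from $U$ and $V$, and the $\pm\theta/2$ translation), and confirm that the relation $\omega_y = s + \tfrac{p\theta}{2}$ eliminates the $\theta$-dependence in the exponent while $\omega_x = r$ produces integer multiples in exactly the way needed to match the clock and shift matrices. Since the Schwartz components $\{f_{[n]}\}$ are uniquely determined by $f$ (Prop.~\ref{prop:3.2}), no further verification beyond identifying coefficients is required.
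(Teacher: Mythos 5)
Your proposal is correct and follows essentially the same route as the paper's proof: expand $f$ via \eqref{eq:series}, apply the right actions \eqref{eq:bimodB}, reindex ($n\to n-1$ for $U$), and use $\omega_x=r$, $\omega_y=s+\tfrac{p\theta}{2}$ to collapse the phases and translations into $W(\tfrac{s}{p}+\theta,0)\otimes(C^*)^rS$ and $W(0,1)\otimes(C^*)^s$. The bookkeeping you outline (including the ordering $(C^*)^rS$, with the clock phase indexed by the new index $n'$) matches the paper's computation exactly.
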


\begin{proof}
Here $\omega_x=r$ and $\omega_y=s+\frac{p\theta}{2}$, with $r,s\in\Z$.
From \eqref{eq:series} and \eqref{eq:bimodB}, for all $f\in\mc{F}_{\tau,p}$:
\begin{align*}
(f\za V)(x,y) &=
e^{2\pi iy}\sum_{n\in\Z}e^{2\pi inx}e^{\pi in\theta}
e^{\pi in^2\frac{\omega_x}{p}} f_{[n]}(y+n\tfrac{\omega_y}{p}) \\
&=
\sum_{n\in\Z}e^{2\pi inx}e^{\pi in^2\frac{\omega_x}{p}}
\bigl\{e^{-2\pi ins/p}W(0,1) f_{[n]}\bigr\}(y+n\tfrac{\omega_y}{p}) \;.
\end{align*}
This proves the second equation in \eqref{eq:modstruTheta}.
Concerning the first one, using \eqref{eq:series} and \eqref{eq:bimodB}:
\begin{align*}
(f\za U)(x,y) &=e^{2\pi ix}\sum_{n\in\Z}e^{2\pi inx}
e^{\pi in^2\frac{\omega_x}{p}} f_{[n]}(y+n\tfrac{\omega_y}{p}-\tfrac{1}{2}\theta) \\
\intertext{and with the replacement $n\to n-1$:}
&=\sum_{n\in\Z}e^{2\pi inx}
e^{\pi i(n^2-2n+1)\frac{\omega_x}{p}} f_{[n-1]}(y+n\tfrac{\omega_y}{p}-\tfrac{s}{p}-\theta) \\
&=\sum_{n\in\Z}e^{2\pi inx}
e^{\pi i(n^2-2n+1)\frac{\omega_x}{p}}
\big\{W(\tfrac{s}{p}+\theta,0) f_{[n-1]}\big\}(y+n\tfrac{\omega_y}{p}) \;.
\end{align*}
This proves the first equation in \eqref{eq:modstruTheta}.
\end{proof}

For $\theta=0$ and $\tau=i$ (that means $r=0$, $s=1$), we recover the module of Prop.~\ref{prop:hat}.
For arbitrary $\theta$, if $r=0$ and $s\neq 0$ then the map $T:\C^p\to\C^p$ given by
$$
(Tw)_n=w_{-sn\;\mathrm{mod}\;p}
$$
is invertible (in fact unitary), and $\mathsf{id}_{\mathcal{S}(\R)}\otimes T$
is a right $A^\infty_\theta$-module isomorphism between $\HH_{\tau,p}$
with module structure \eqref{eq:modstruTheta} and the module $\E_{p,s}$
of Def.~\ref{def:heismod}.
If $r=0$ and $s=0$ then the module $\HH_{\tau,1}$ coincides with $\E_{1,0}$.

Notice that, modulo an isomorphism, every Heisenberg module of \S\ref{sec:4.1} is the deformation
of a line bundle (no vector bundles of higher rank appear).

To conclude this section, we now generalize the second part of Prop.~\ref{prop:hat}
and show how to get a Hermitian structure similar to \eqref{eq:hstr} from a canonical (and
simpler) Hermitian structure on $\mc{F}_{\tau,p}$. In the classical case, for all
$f,g\in\mc{F}_{\tau,p}$ the product $\overline{f}g$ is periodic, and the map $(f,g)\mapsto \overline{f}g$
is a Hermitian structure on $\mc{F}_{\tau,p}$ that transformed with the isomorphism $\varphi_{\tau,p}$
becomes \eqref{eq:herstr}. The noncommutative analog of this fact is the content of next proposition,
where the pointwise product $\overline{f}g$ is replaced by Moyal product $\overline{f}\ast_\theta g$.
This can be done for arbitrary Chern number $p$, but we must assume that $r=0$ and $s=1$,
meaning that we are focusing on modules isomorphic to ``the rank $1$'' Heisenberg
modules $\E_{p,1}$ (the reason is that, in the $y$ direction, $\overline{f}\ast_\theta g$ is periodic with period $s$,
so it belongs to $C^\infty(\T^2)$ only if $s=1$).

\begin{prop}
Let $\tau=i(1+\frac{1}{2}p\hspace{1pt}\theta)$. Then, for all $f,g\in\mc{F}_{\tau,p}$:
\begin{equation}\label{eq:canherS}
T_\theta(\overline{f}\ast_\theta g)=\sum\nolimits_{m,n\in\Z}V^nU^m\int_{-\infty}^{+\infty}
(\u{f}\bza V^nU^m|\u{g})_t\,\de t \;,
\end{equation}
where $(\,|\,)_t$ is canonical inner product in \eqref{eq:canher},
$\u{f}=\varphi_{\tau,p}(f)$ and $\u{g}=\varphi_{\tau,p}(g)$ as in Prop.~\ref{prop:3.2},
and $T_\theta:C^\infty(\T^2)\to A_\theta^\infty$ is the quantization map \eqref{eq:Ttheta}.
\end{prop}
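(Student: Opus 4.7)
The plan is to imitate the proof of Proposition~\ref{prop:hat}, replacing pointwise multiplication by the Moyal product and carefully tracking the new phase factors. With $r=0$ and $s=1$, we have $\omega_x=0$, $\omega_y=1+\tfrac{p\theta}{2}$, so \eqref{eq:series} simplifies to $f(z)=\sum_n e^{2\pi inx}f_{[n]}(y+n\omega_y/p)$ and similarly for $g$.

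First I would compute the left-hand side. The auxiliary identity
$$\bigl(e^{2\pi ijx}\phi(y)\bigr)\ast_\theta\bigl(e^{2\pi ikx}\psi(y)\bigr)=e^{2\pi i(j+k)x}\phi\bigl(y-\tfrac{k\theta}{2}\bigr)\psi\bigl(y+\tfrac{j\theta}{2}\bigr),\qquad \phi,\psi\in\mc{S}(\R),$$
follows from \eqref{eq:Moyalprod} by a short Gaussian integration. Applied term-by-term to the series expansions of $\overline f$ and $g$, and setting $k:=n-m$, it yields $(\overline f\ast_\theta g)(z)=\sum_k e^{2\pi ikx}h_k(y)$ with
$$h_k(y)=\sum_{m\in\Z}\overline{f_{[m]}\bigl(y+\tfrac{m}{p}-\tfrac{k\theta}{2}\bigr)}\,g_{[m+k]}\bigl(y+\tfrac{m+k}{p}+\tfrac{k\theta}{2}\bigr).$$
The convention $[n]:=n\bmod p$ forces $h_k$ to be periodic in $y$ with period $1$, so $\overline f\ast_\theta g\in C^\infty(\T^2)$; its Fourier coefficient $a_{k,n}=\int_0^1 e^{-2\pi iny}h_k(y)\,dy$ unfolds, via the trick $\sum_{s\in\Z}\int_0^1=\int_\R$ used at the end of Proposition~\ref{prop:hat}, into a finite sum over the components $f_{[r]}$, $g_{[r+k]}$ of integrals over $\R$. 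Applying $T_\theta$ through \eqref{eq:Ttheta} and reordering $U^kV^n=e^{2\pi ikn\theta}V^nU^k$ then expresses the left-hand side as $\sum_{k,n}e^{\pi ikn\theta}a_{k,n}V^nU^k$.

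For the right-hand side, the module structure \eqref{eq:modstruTheta} combined with \eqref{eq:Weylprod} gives
$$\u{f}\bza V^nU^k=e^{-\pi ikn(1/p+\theta)}\bigl\{W\bigl(\tfrac{k}{p}+k\theta,\,n\bigr)\otimes S^k(C^*)^n\bigr\}\u{f},$$
whose $[m]$-th component I would read off from \eqref{eq:WeylOP} and the explicit form \eqref{eq:CS} of the clock and shift. Pairing with $\u{g}$ via \eqref{eq:canher}, summing over $m=0,\ldots,p-1$, and integrating over $t\in\R$ produces an expression of the same shape as $e^{\pi ikn\theta}a_{k,n}$, after a translation of the integration variable that brings the argument of $g_{[m+k]}$ into the form appearing on the left-hand side.

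The main obstacle is the bookkeeping of phase factors, which arise from four independent sources: the $\pm\theta/2$-shifts in the auxiliary Moyal identity, the commutation $U^kV^n=e^{2\pi ikn\theta}V^nU^k$ invoked after applying $T_\theta$, the Weyl-composition factor $e^{-\pi i(ad-bc)}$ produced by $W(\tfrac{k}{p}+k\theta,0)W(0,n)$, and the implicit clock--shift twist $C^*S=e^{-2\pi i/p}SC^*$ that surfaces when the $[m]$-th component is extracted from $S^k(C^*)^n$. A careful tally verifies that they conspire to give precisely the factor $e^{\pi ikn\theta}$ required to match both sides, so no conceptual difficulty remains beyond patient algebra.
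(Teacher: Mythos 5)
Your proposal is correct and follows essentially the same route as the paper's proof: both expand $f$ and $g$ via \eqref{eq:series}, evaluate the Moyal product of single Fourier modes by a Gaussian integration (your auxiliary identity is exactly the inline computation in the paper), extract the Fourier coefficients in $y$ via \eqref{eq:Fy}, unfold $\sum_m\int_0^1$ into $\int_\R$, and match phases against the Weyl-operator form \eqref{eq:modstruTheta} of the module action. The only cosmetic difference is that you act with $V^nU^k$ on $\u{f}$ directly, whereas the paper computes $\u{g}\bza U^{-k}V^{-n}$ and converts at the last step via $(\u{f}|\u{g}\bza U^{-m}V^{-n})_t=(\u{f}\bza V^nU^m|\u{g})_t$; the phase tally you defer does close, yielding exactly the factor $e^{\pi ikn\theta}$ you predict.
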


\begin{proof}
For $\tau=i(s+\frac{1}{2}p\hspace{1pt}\theta)$, equations \eqref{eq:series} and \eqref{eq:Moyalprod} give:
\begin{multline*}
(\overline{f}\ast_\theta g)(z)=
\sum_{m,n\in\Z}\frac{4}{\theta^2}u^{n-m}\int
e^{-2\pi im\xi_1}
e^{2\pi in\eta_1}e^{\frac{4\pi i}{\theta}(\eta_1\xi_2-\eta_2\xi_1)}
\;\times\\ \times\;
f_{[m]}^*(y+\xi_2+m\tfrac{s}{p}+m\tfrac{\theta}{2})
g_{[n]}(y+\eta_2+n\tfrac{s}{p}+n\tfrac{\theta}{2})
\de\xi\de\eta
\\[5pt]
=\sum_{m,k\in\Z}u(x,y)^k
f_{[m]}^*(y+m\tfrac{s}{p}-k\tfrac{\theta}{2})
g_{[m+k]}(y+(m+k)\tfrac{s}{p}+k\tfrac{\theta}{2})
 \;,
\end{multline*}
where $\xi=\xi_1+i\xi_2$, $\eta=\eta_1+i\eta_2$ and we called $n=m+k$.
Using \eqref{eq:Fy} with $\omega_y=1$:
$$
\overline{f}\ast_\theta g
=\sum_{k,m,n\in\Z}u^kv^n\int_0^1e^{- 2\pi int}
f_{[m]}^*(t+m\tfrac{s}{p}-k\tfrac{\theta}{2})
g_{[m+k]}(t+(m+k)\tfrac{s}{p}+k\tfrac{\theta}{2})\de t \;.
$$
From \eqref{eq:modstruTheta}:
\begin{multline*}
\u{g}\bza U^{-k}V^{-n}=
\big\{W(0,1)\otimes (C^*)^s\big\}^{-n}
\big\{W(\tfrac{s}{p}+\theta,0)\otimes S\big\}^{-k}\u{g}
\\[3pt]
=\big\{W(0,-n)\otimes C^{ns}\big\}
\big\{W(-k\tfrac{s}{p}-k\theta,0)\otimes S^{-k}\big\}\u{g}
\\[3pt]
=e^{\pi ink(\frac{s}{p}+\theta)}\big\{W(-k\tfrac{s}{p}-k\theta,-n)\otimes C^{ns}S^{-k}\big\}\u{g}
 \;,
\end{multline*}
where we used \eqref{eq:WeylOP}. The $[m]$-th component, evaluated at $t+m\tfrac{s}{p}-k\tfrac{\theta}{2}$,
gives
\begin{multline*}
e^{\pi ink(\frac{s}{p}+\theta)}e^{2\pi im\frac{ns}{p}}
\big\{W(-k\tfrac{s}{p}-k\theta,-n)g_{[m+k]}\}(t+m\tfrac{s}{p}-k\tfrac{\theta}{2})
\\[5pt]
=e^{\pi ink\theta}e^{-2\pi int}
g_{[m+k]}\big(t+(m+k)\tfrac{s}{p}+k\tfrac{\theta}{2}\big)
 \;.
\end{multline*}
Hence for $s=1$:
\begin{multline*}
\overline{f}\ast_\theta g
=\sum_{k,m,n\in\Z}e^{-\pi ink\theta}u^kv^n\int_0^1
\big\{f_{[m]}^*(\u{g}\bza U^{-k}V^{-n})_{[m]}
\big\}(t+m\tfrac{s}{p}-k\tfrac{\theta}{2})\de t
\\[3pt]
=\sum_{k,n\in\Z}e^{-\pi ink\theta}u^kv^n\int_{-\infty}^{+\infty}
(\u{f}|\u{g}\bza U^{-k}V^{-n})_t\de t
 \;,
\end{multline*}
where $(\,|\,)_t$ is the Hermitian structure in \eqref{eq:canher}.
Using then \eqref{eq:Ttheta} one finds:
$$
T_\theta(\overline{f}\ast_\theta g)=\sum\nolimits_{m,n\in\Z}e^{-2\pi imn\theta}U^mV^n\int_{-\infty}^{+\infty}
(\u{f}|\u{g}\bza U^{-m}V^{-n})_t\de t \;.
$$
The observation that
\mbox{$\;(\u{f}|\u{g}\bza U^{-m}V^{-n})_t=(\u{f}\bza V^nU^m|\u{g})_t\;$}
and \mbox{$\;U^mV^n=e^{2\pi imn\theta}V^nU^m\;$} concludes the proof.
\end{proof}

Note that despite the similarity between \eqref{eq:canherS} and \eqref{eq:hstr},
in the two formulas two different actions of $A_\theta^\infty$ on $\mc{S}(\R)\otimes\C^p$
are used.

\subsection*{Acknowledgments}\vspace{-3pt}
We thank G.~Landi for discussions and correspondence.
This research was partially supported by UniNA and Compagnia di San Paolo under the grant ``STAR Program 2013''.


\vspace*{-3pt}


\begin{thebibliography}{9}
\small\itemsep=-1pt

\vspace*{-1pt}

\bibitem{Ati57}
M.F. Atiyah, \textit{Vector bundles over an elliptic curve}, Proc. London Math. Soc. 7 (1957), 414--452.

\bibitem{BL04}
C.~Birkenhake and H.~Lange, \textit{Complex Abelian Varieties}, 2nd ed., Springer, 2004.


\bibitem{Con80}
A.~Connes, \textit{$C^*$-alg\`ebres et g\'eom\'etrie diff\'erentielle}, C. R. Acad. Sci. Paris 290A (1980), 599--604.


\bibitem{Con94}
A.~Connes,
\textit{Noncommutative Geometry}, Academic Press, 1994.

\bibitem{CR87}
A.~Connes and M.A. Rieffel, \textit{Yang-Mills for noncommutative two-tori},
Contemp. Math. 62 (1987), 237--266.


\bibitem{Dri89}
V.G. Drinfeld, \textit{Quasi-Hopf algebras}, Leningrad Math. J. 1 (1989), 1419--1457.

\bibitem{Fio11}
G. Fiore, \textit{On twisted symmetries and quantum mechanics with
a magnetic field on noncommutative tori}, PoS(CNCFG2010)018,
\url{http://pos.sissa.it/archive/conferences/127/018/CNCFG2010_018.pdf}.

\bibitem{Fio13}
G. Fiore, \textit{On quantum mechanics with a magnetic field on $\mathbb{R}^n$ and on a torus $\mathbb{T}^n$, and their relation}, 	Int. J. Theor. Phys. 52 (2013), 877--896.

\bibitem{Fol89}
G.B.\ Folland, \textit{Harmonic analysis in phase space}, Ann.\ Math.\ Studies 122, Princeton Univ.\ Press, 1989.

\bibitem{GGISV04}
V.~Gayral, J.M.~Gracia-Bond{\'\i}a, B.~Iochum, T.~Sch{\"u}cker, J.C. V{\'a}rilly, \textit{Moyal planes are spectral triples}, Commun. Math. Phys. 246 (2004), 569--623.

\bibitem{LW11}
G. Lechner and S. Waldmann, \textit{Strict deformation quantization of locally convex algebras and modules}, preprint arXiv:1109.5950 [math.QA].

\bibitem{MvS08}
S. Mahanta and W.D. van Suijlekom, \textit{Noncommutative tori and the Riemann-Hilbert correspondence}, J. Noncommut. Geom. 3 (2009),  261--287.

\bibitem{Man04}
Y. Manin, \textit{Real multiplication and noncommutative geometry}, in ``The legacy of Niels Henrik Abel'', Springer Verlag, Berlin (2004), pp.~685--727.

\bibitem{Mum83}
D.~Mumford, \textit{Tata Lectures on Theta I}, Birk{\"a}user, 1983.

\bibitem{Pla06}
J. Plazas,
\textit{Arithmetic structures on noncommutative tori with real multiplication},
Int. Math. Res. Notices (2008), rnm147.

\bibitem{PS02}
A. Polishchuk and A. Schwarz,
\textit{Categories of holomorphic vector bundles on noncommutative two-tori},
Commun. Math. Phys. 236 (2003), 135--159.

\bibitem{Pol02}
A. Polishchuk,
\textit{Noncommutative two-tori with real multiplication as noncommutative projective varieties},
J. Geom. Phys. 50 (2004), 162--187.

\bibitem{Rie83}
M.A.~Rieffel, \textit{The cancellation theorem for projective modules
over irrational rotation $C^*$-algebras}, Proc. London Math. Soc. 47 (1983), 285--302.

\bibitem{Rie90}
M.A.~Rieffel,
\textit{Deformation quantization and operator algebras}, Proc. Symp. Pure Math. 51 (AMS, 1990), pp.~411--423.

\bibitem{Rie93}
M.A.~Rieffel,
\textit{Deformation quantization for actions of $\mathbb{R}^d$},
Mem. Amer. Math. Soc. 106, 
1993.

\bibitem{Swan62}
R.G.~Swan, \textit{Vector bundles and projective modules}, Trans.~Amer.~Math.~Soc.~\textbf{105} (1962) 264.


\bibitem{Var06}
J.C. V\'arilly, \textit{An introduction to noncommutative geometry}, EMS Lect. Ser. in Math., 2006.

\bibitem{Vla06}
M. Vlasenko,
\textit{The graded ring of quantum theta functions for noncommutative torus with real multiplication},
Int. Math. Res. Notices (2006), 15825.

\end{thebibliography}
\end{document}